\tikzstyle{decision} = [diamond, draw, fill=blue!20, 
\tikzstyle{block} = [rectangle, draw, fill=blue!20, 
\tikzstyle{line} = [draw, -latex']
\tikzstyle{cloud} = [draw, ellipse,fill=red!20, node distance=3cm,
\tikzset{main node/.style={circle,fill=blue!20,draw,minimum size=1cm,inner sep=0pt},  }
\begin{document}
\title[Entropy dissipation on graphs]{Entropy dissipation of Fokker-Planck equations on graphs}
\author[Chow]{Shui-Nee Chow}
\address{School of Mathematics, Georgia institute of technology, Atlanta.}
\author[Li]{Wuchen Li}
\address{Department of Mathematics, University of California, Los Angeles.}
\author[Zhou]{Haomin Zhou}
\email{chow@math.gatech.edu }
\email{wcli@math.ucla.edu}
\email{hmzhou@math.gatech.edu}
\thanks{This work is partially supported by NSF Awards DMS-1419027, DMS-1620345, and ONR Award N000141310408.}
\maketitle
\begin{abstract}
We study the nonlinear Fokker-Planck equation on graphs, which is the gradient flow in the space of 
probability measures supported on the nodes with respect to the discrete Wasserstein metric. The energy functional driving the gradient flow consists of a Boltzmann entropy, a linear potential and a quadratic interaction energy. We show that the solution converges to the Gibbs measures exponentially fast with a rate that can be given analytically. The continuous analog of this asymptotic rate is related to the Yano's formula. \end{abstract}

\section{Introduction}
Optimal transport theory reveals many deep connections between partial differential equations and geometry. For example, in the seminal work \cite{JKO}, it is proved that the linear Fokker-Planck equation (FPE) is the gradient flow 
of a free energy in the probability space equipped with Wasserstein metric \cite{G1, otto2001, vil2003, vil2008}. This gradient flow interpretation has been extended to mean field settings, in which 
the free energy contains an interaction energy \cite{am2006}. Many studies have been carried out showing that the 
solution of FPE converges to its equilibrium in an exponential rate, and this is known as the entropy dissipation
in the literature \cite{carrillo2003kinetic,d2001, MC2000}.

The goal of this paper is studying the entropy dissipation of FPE in discrete settings, for example on finite graphs. Such a consideration is motivated by applications in biology, game theory, and  numerical schemes for partial differential equations (PDEs). The optimal transport metric on graphs has been established  by several groups independently \cite{chow2012, maas2011gradient, Mielke2011}. The gradient flow structure based such a metric 
attracts a lot of attentions in recent years. For example, Mass and Erbar studied the discrete heat flow, and further gave the Ricci curvature lower bound in the discrete space \cite{erbar2012ricci}. More generalizations are followed in \cite{Maas1, Maas3, Maas2}. Mielke proposed the discrete reaction diffusion equation \cite{mielke2013geodesic}. Erbar, Fathi and collaborators introduced a discrete McKean-Vlasov equation \cite{G6}, 
which is the evolution equation for the probability density function of mean field Markov process. Various convergence properties of these gradient flows have been brought into attentions as well \cite{T2016, erbar2012ricci, Maas3}.

Following the setups in \cite{chow2012}, we further study the dynamical properties of the gradient flows in the discrete Wasserstein geometry in this paper. Special attention are given to a free energy containing a quadratic interaction energy, a linear potential and the Boltzmann entropy. In this case, the gradient flow can be viewed as the nonlinear FPE on graphs, which is a set of ordinary differential equations (ODEs). We show that the solution of FPE 
converges to, the unique or one of the multiple when the free energy is non-convex, Gibbs measure 
exponentially fast,  which mimics the entropy dissipation property, but in a discrete space. 
We further provide an explicit formula that bounds the convergence rate. The continuous analog of the asymptotic rate formula is related to the Yano's formula in Riemanian geometry \cite{yano1, yano2}.   

The structure of this paper is arranged as follows. We review discrete 2-Wasserstein 
metric and Fokker-Planck equations on graphs in the next section, then study its convergence 
in Section \ref{dissipation}. In Section \ref{analog}, we discuss some properties stemed from the 
convergence rate, including the connection with Yano's formula. 

\section{Optimal transport on finite graphs} \label{review}
In this section, we briefly review the constructions of 2-Wasserstein metric and corresponding FPE on a graph . We mainly follow the approaches given in \cite{chow2012, maas2011gradient}, with some modified notations for a simpler 
presentation.

Consider a weighted finite graph $G=(V, E, \omega)$, where 
$V=\{1,2,\cdots, n\}$
is the vertex set, $E$ is the edge set, and $\omega=(\omega_{ij})_{i,j\in V}$ is the weight of each edge,
\begin{equation*}
\omega_{ij}=\begin{cases}\omega_{ji}>0
& \textrm{if $(i,j)\in E$}\\
0 & \textrm{otherwise}
\end{cases}\ .
\end{equation*}
We assume that $G$ is undirected and contains no self loops
or multiple edges. The adjacency set of vertex $i\in V$ is denoted by
\begin{equation*}
N(i)=\{j\in V\mid (i,j)\in E\}\ .
\end{equation*} 
The probability set (simplex) supported on all vertices of $G$ is defined by 
\begin{equation*}
\mathcal{P}(G)=\{( \rho_i)_{i=1}^n \in \mathbb{R}^{n}\mid \sum_{i=1}^n \rho_i=1\ ,\quad  \rho_i\geq 0\ , \quad \textrm{for any $i\in V$}\}\ ,
\end{equation*}
where $ \rho_i$ is the discrete probability function at node $i$.  Its interior is denoted by
 $\mathcal{P}_o(G)$. We introduce the the following notations and operations 
 on $G$ and $\mathcal{P}(G)$ and use them for the construction the discrete 2-Wasserstein metric.

A {\em vector field} $v=(v_{ij})_{i,j\in V}\in \mathbb{R}^{n\times n}$ on $G$ is a  
{\em skew-symmetric matrix} on the edges set $E$:  
\begin{equation*}
v_{ij}=\begin{cases}-v_{ij} & \textrm{if $(i,j)\in E$}\\
0 & \textrm{otherwise}
\end{cases}\ .
\end{equation*}
Given a function $\Phi=(\Phi_i)_{i=1}^n\in \mathbb{R}^{n}$ defined on the nodes of $G$, a
{\em potential vector field} $\nabla_G\Phi=(\nabla_G\Phi_{ij})_{i,j\in V}\in \mathbb{R}^{n\times n}$ refers to
\begin{equation*}
\nabla_G\Phi_{ij}=\begin{cases}\sqrt{\omega_{ij}}(\Phi_i-\Phi_j) & \textrm{if $(i,j)\in E$}\\
0 & \textrm{otherwise}
\end{cases}\ .
\end{equation*}
For a given probability function $\rho\in \mathcal{P}(G)$ and a vector field $v$,  we define 
the product $\rho v\in \mathbb{R}^{n\times n}$, called {\em flux function} on $G$, by 
\begin{equation*}
\rho v:=(v_{ij} \theta_{ij}(\rho))_{(i,j)\in E}\ ,
\end{equation*}
where $\theta_{ij}(\rho)$ are specially chosen functions. For example, $\theta_{ij}(\rho)$ can be the logarithmic mean or an upwind
function of $\rho$, which are used in \cite{chow2012, maas2011gradient}, see more details in \cite{li-thesis}. In this paper, we select $\theta_{ij}(\rho)$ as the average of $ \rho_i$ and $ \rho_j$, i.e. 
 \begin{equation*}
 \theta_{ij}(\rho)=\frac{\rho_i+\rho_j}{2}\ ,\quad \textrm{for any $(i,j)\in E$}\ ,
\end{equation*}
for the simplicity of illustration. 

We define the {\em divergence} of $\rho v$ on $G$ by
\begin{equation*}
\textrm{div}_G(\rho v):=-\biggl(\sum_{j\in N(i)}\sqrt{\omega_{ij}}v_{ij}\theta_{ij}(\rho)\biggr)_{i=1}^n\in \mathbb{R}^{n}\ .
\end{equation*}
Given two vector fields $v$, $w$ on a graph and $\rho \in \mathcal{P}(G)$.
The discrete {\em inner product} is defined by
\begin{equation*}
(v, w)_ \rho:=\frac{1}{2}\sum_{(i,j)\in E} v_{ij}w_{ij}\theta_{ij}(\rho)\ . 
\end{equation*}
The coefficient $1/2$ in front of the summation accounts for the fact that every edge in $G$ is counted 
twice. In particular, we have 
\begin{equation*}
(v, v)_ \rho=\frac{1}{2}\sum_{(i,j)\in E} v_{ij}^2\theta_{ij}(\rho)\ .
\end{equation*}

With these definitions, we introduce an integration by part formula on graphs that will be used throughout 
this paper: {\it For any vector field $v$ and potential function $\Phi$ on a graph, the following properties hold
$$
-\sum_{i=1}^n \textrm{div}_G(\rho v)|_i \Phi_i=(v, \nabla_G\Phi)_ \rho\ ,
$$
and 
$$
\sum_{i=1}^n \textrm{div}_G(\rho v)|_i =0\ .
$$
}
\begin{proof} From $v_{ij} + v_{ji} = 0$,  we write
\begin{equation}
\begin{split}
-\sum_{i=1}^n \textrm{div}_G(\rho v)|_i \Phi_i
=&  \sum_{i=1}^n \sum_{j\in N(i)} \sqrt{\omega_{ij}} v_{ij}\theta_{ij}\Phi_i \nonumber \\
=&  \frac{1}{2}(\sum_{(i,j)\in E}\sqrt{\omega_{ij}} v_{ij}\Phi_i\theta_{ij}+ \sum_{(j,i)\in E}\sqrt{\omega_{ij}} v_{ji}\Phi_j \theta_{ji})\nonumber\\
=&\frac{1}{2}\sum_{(i,j)\in E} v_{ij} \sqrt{\omega_{ij}} (\Phi_i-\Phi_j)\theta_{ij}\label{eq:v_{ij}+v_{ji}=0}  \\
=&(\nabla_G\Phi, v)_\rho\ .\nonumber
\end{split}
\end{equation}
Let $\Phi=(1,\cdots, 1)^T$, then $\sum_{i=1}^n \textrm{div}_G(\rho v)|_i =-\sum_{i=1}^n (v, \nabla_G1)_ \rho=0$ .

\end{proof}
It is worth to remark that we prefer not to replace $\theta_{ij}$ by its explicit formula, as done in \cite{chow2012, 
maas2011gradient}, to emphasize the freedom of using different $\theta_{ij}$, which can result in different
definitions for the flux function, divergence operator and inner product, and hence lead to
different formulas for the discrete $2$-Wasserstein metric. 
%
\subsection{2-Wasserstein metric on a graph}
The discrete analogue of $2$-Wasserstein metric $\mathcal{W}_2$ on probability set $\mathcal{P}_o(G)$
can be given as following.
{\em For any given $ \rho^0$, $ \rho^1\in \mathcal{P}_o(G)$, define 
\begin{equation}\label{metric}
\mathcal{W}_{2}^2( \rho^0, \rho^1):=\inf_{v}~\{\int_0^1(v, v)_\rho dt~:~
\frac{d\rho}{dt}+\textrm{div}_G(\rho v)=0\ ,\quad \rho(0)= \rho^0\ ,\quad  \rho(1)= \rho^1\}\ ,
\end{equation}
where the infimum is taken over all vector fields $v$ on a graph, and $\rho$ is a 
continuously differentiable curve
$ \rho\colon [0,1]\rightarrow \mathcal{P}_o(G)$. }

This is the corresponding Benamou-Brenier formula \cite{bb} in discrete space. Modifying a similar proof as 
given in \cite{maas2011gradient}, one can show the following lemma, see details in \cite{li-thesis}.
\begin{lemma}\label{Hodge}
Given a vector field on a graph $v=(v_{ij})_{(i,j)\in E}$ with $v_{ij}=-v_{ji}$, and a measure $\rho\in\mathcal{P}_o(G)$, there exists a unique decomposition, such that 
\begin{equation*}
v=\nabla_G\Phi+u\ , \quad \textrm{and}\quad \textrm{div}_G(\rho u)=0\ ,
\end{equation*}
where $\Phi$ is a function defined on $V$. In addition, the following property holds,
\begin{equation*}
(v, v)_\rho=(\nabla_G\Phi, \nabla_G\Phi)_\rho+(u, u)_\rho\ .
\end{equation*}
\end{lemma}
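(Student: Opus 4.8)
The plan is to recognize this as a finite-dimensional orthogonal (Hodge-type) decomposition and to read off both the splitting and the energy identity directly from the integration by parts formula proved just above. First I would equip the space of skew-symmetric vector fields $\{v=(v_{ij})_{(i,j)\in E}\}$, call it $\mathcal{V}$, with the bilinear form $(\cdot,\cdot)_\rho$. Because $\rho\in\mathcal{P}_o(G)$, every coefficient $\theta_{ij}(\rho)=(\rho_i+\rho_j)/2$ is strictly positive, so $(v,v)_\rho=\frac12\sum_{(i,j)\in E}v_{ij}^2\theta_{ij}(\rho)$ is positive definite; hence $(\cdot,\cdot)_\rho$ is a genuine inner product on the finite-dimensional space $\mathcal{V}$.

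Next I would identify the two summands as orthogonal complements. Let $L=\{\nabla_G\Phi:\Phi\in\mathbb{R}^n\}$ be the subspace of potential (gradient) fields. The integration by parts formula gives $(u,\nabla_G\Phi)_\rho=-\sum_{i=1}^n\textrm{div}_G(\rho u)|_i\,\Phi_i$ for every $\Phi$, so $u$ is $(\cdot,\cdot)_\rho$-orthogonal to $L$ if and only if $\sum_i \textrm{div}_G(\rho u)|_i\,\Phi_i=0$ for all $\Phi$; taking $\Phi=e_i$ shows this is equivalent to $\textrm{div}_G(\rho u)=0$. Thus $L^\perp$ is exactly the space of $\rho$-divergence-free fields, which is the key structural observation.

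With this identification, existence and uniqueness are immediate from the orthogonal splitting $\mathcal{V}=L\oplus L^\perp$: I would take $\nabla_G\Phi$ to be the $(\cdot,\cdot)_\rho$-orthogonal projection of $v$ onto $L$ and set $u=v-\nabla_G\Phi\in L^\perp$, so that $\textrm{div}_G(\rho u)=0$. For uniqueness, if $\nabla_G\Phi_1+u_1=\nabla_G\Phi_2+u_2$ with both $u_k$ divergence-free, then $\nabla_G(\Phi_1-\Phi_2)=u_2-u_1\in L\cap L^\perp=\{0\}$, so the fields $\nabla_G\Phi$ and $u$ are uniquely determined (the potential $\Phi$ itself only up to an additive constant on each connected component, these being $\ker\nabla_G$). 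Equivalently, one can exhibit $\Phi$ concretely by solving the weighted discrete Poisson equation $-\textrm{div}_G(\rho\nabla_G\Phi)=-\textrm{div}_G(\rho v)$; its right-hand side has zero total sum by the second identity $\sum_i\textrm{div}_G(\rho v)|_i=0$, which is precisely the compatibility condition placing it in the range of the positive semidefinite weighted graph Laplacian $\Phi\mapsto -\textrm{div}_G(\rho\nabla_G\Phi)$. The energy identity then follows by expanding $(v,v)_\rho=(\nabla_G\Phi+u,\nabla_G\Phi+u)_\rho$ and discarding the cross term $2(\nabla_G\Phi,u)_\rho=0$, which vanishes exactly because $u\in L^\perp$.

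The only genuine subtlety in this otherwise routine finite-dimensional argument is the bookkeeping around uniqueness: since $\ker\nabla_G$ consists of functions that are constant on connected components, $\Phi$ is not literally unique, and the statement must be read as uniqueness of the vector fields $\nabla_G\Phi$ and $u$ rather than of $\Phi$. Every other ingredient — positive definiteness of $(\cdot,\cdot)_\rho$, the adjointness relating $\nabla_G$ and $\textrm{div}_G(\rho\,\cdot)$, and solvability of the Poisson problem — is handed to us directly by the strict positivity of $\rho$ on $\mathcal{P}_o(G)$ together with the integration by parts formula.
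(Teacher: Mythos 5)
Your proof is correct and complete. The paper itself does not prove Lemma \ref{Hodge} --- it defers to \cite{maas2011gradient} and \cite{li-thesis} --- and your argument is exactly the standard one those references use: the form $(\cdot,\cdot)_\rho$ is a genuine inner product on skew-symmetric fields because $\theta_{ij}(\rho)>0$ on $\mathcal{P}_o(G)$, the integration-by-parts identity shows the orthogonal complement of the gradient fields is precisely the $\rho$-divergence-free fields, and the decomposition together with the energy identity then follow from the orthogonal splitting. Your remark that uniqueness holds for the fields $\nabla_G\Phi$ and $u$ while $\Phi$ itself is determined only up to an additive constant (a single constant here, since the paper assumes $G$ connected) is the right reading of the statement and is consistent with Lemma \ref{lem2}.
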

 One may view Lemma \ref{Hodge} as a discrete analogue of the well-known Hodge decomposition. Using it, the metric \eqref{metric} can be proven equivalent to 
\begin{equation} \label{w2_1}
\begin{split}
\left(\mathcal{W}_{2}( \rho^0, \rho^1)\right)^2=&\inf_{\Phi}\{\int_0^1(\nabla_G\Phi, \nabla_G\Phi)_\rho dt ~:~\frac{d\rho}{dt}+\textrm{div}_G( \rho \nabla_G\Phi)=0\ ,~ \rho(0)= \rho^0\ ,~ \rho(1)= \rho^1\}\ ,
\end{split}
\end{equation}
where the infimum is taken over all potentials $\Phi\colon[0,1]\rightarrow\mathbb{R}^n$.
 
Let us denote the tangent space at $\rho \in \mathcal{P}_o(G)$ as  
\begin{equation*}
T_\rho\mathcal{P}_o(G)=\{(\sigma_i)_{i=1}^n\in \mathbb{R}^n\mid \sum_{i=1}^n\sigma_i=0\}\ .
\end{equation*}
We define a weighted graph Laplacian matrix $L(\rho)\in \mathbb{R}^{n\times n}$:
$$ L(\rho)=-D^T \Theta(\rho) D\ ,$$
where $D \in \mathbb{R}^{|E|\times |V|}$ is the discrete gradient matrix 
\begin{equation*} 
D_{(i,j)\in E, k\in V}=\begin{cases}
\sqrt{\omega_{ij}} & \textrm{if $i=k\ ;$}\\ 
-\sqrt{\omega_{ij}} & \textrm{if $j=k\ ;$}\\
0 & \textrm{otherwise}\ ;
\end{cases}
\end{equation*}
and
$\Theta\in \mathbb{R}^{|E|\times |E|}$ is the diagonal weighted matrix
\begin{equation*}
\Theta_{(i,j)\in E, (k,l)\in E}=\begin{cases}
\theta_{ij}(\rho) & \textrm{if $(i,j)=(k,l)\in E\ ;$}\\ 
0 & \textrm{otherwise}\ .
\end{cases}
\end{equation*}
We would like to emphasize that the weights in $L(\rho)$ 
depend on the distribution $\rho$, and this is very different from the commonly used graph Laplacian matrices. 

\begin{lemma}\label{lem2}
For any given $\sigma \in T_\rho\mathcal{P}_o(G)$, 
there exists a unique function $\Phi$, up to a constant shift, satisfying $$\sigma =L(\rho)\Phi= -\textrm{div}_G(\rho\nabla_G \Phi)\ .$$ 
\end{lemma}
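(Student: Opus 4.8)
The plan is to realize $L(\rho)$ as a symmetric, positive semidefinite matrix on $\mathbb{R}^n$ and to identify its kernel and range explicitly, so that the existence and uniqueness (up to a constant) follow from the orthogonal splitting $\mathbb{R}^n=\ker L(\rho)\oplus\operatorname{range}L(\rho)$ afforded by self-adjointness.

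First I would record that $L(\rho)=-D^T\Theta(\rho)D$ is symmetric, since $\Theta(\rho)$ is diagonal. Next, for any $\Phi\in\mathbb{R}^n$ I would compute the associated quadratic form by invoking the integration-by-parts identity already proved above with $v=\nabla_G\Phi$:
$$\sum_{i=1}^n \Phi_i\,(L(\rho)\Phi)_i=-\sum_{i=1}^n \textrm{div}_G(\rho\nabla_G\Phi)|_i\,\Phi_i=(\nabla_G\Phi,\nabla_G\Phi)_\rho=\frac{1}{2}\sum_{(i,j)\in E}\omega_{ij}(\Phi_i-\Phi_j)^2\theta_{ij}(\rho)\ .$$
Since $\rho\in\mathcal{P}_o(G)$ lies in the interior, every weight $\theta_{ij}(\rho)=(\rho_i+\rho_j)/2>0$, so this form is nonnegative and $L(\rho)$ is positive semidefinite.

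The crux is to pin down $\ker L(\rho)$. Because each summand $\omega_{ij}(\Phi_i-\Phi_j)^2\theta_{ij}(\rho)$ is nonnegative, the form vanishes exactly when $\Phi_i=\Phi_j$ for every edge $(i,j)\in E$; assuming $G$ is connected, this forces $\Phi$ to be constant, so that $\ker L(\rho)=\operatorname{span}\{(1,\cdots,1)^T\}$ is one-dimensional. This connectedness hypothesis is the one step I would make sure is in force, and it is essentially the only obstacle: without it the kernel acquires one dimension per connected component and uniqueness up to a single constant fails. By self-adjointness, $\operatorname{range}L(\rho)$ is the orthogonal complement of $\ker L(\rho)$, namely $\{\sigma\in\mathbb{R}^n\mid \sum_{i=1}^n\sigma_i=0\}=T_\rho\mathcal{P}_o(G)$; this matches the second integration-by-parts identity $\sum_{i=1}^n\textrm{div}_G(\rho v)|_i=0$, which already shows $\operatorname{range}L(\rho)\subseteq T_\rho\mathcal{P}_o(G)$ directly.

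Finally I would read off the conclusion. Given $\sigma\in T_\rho\mathcal{P}_o(G)=\operatorname{range}L(\rho)$, there exists $\Phi$ with $L(\rho)\Phi=\sigma$, which is existence; and if $L(\rho)\Phi_1=L(\rho)\Phi_2=\sigma$ then $\Phi_1-\Phi_2\in\ker L(\rho)$ is constant, which is uniqueness up to a constant shift. Everything beyond the kernel computation is linear algebra resting on the integration-by-parts formula and the strict positivity of $\theta_{ij}(\rho)$ on the interior $\mathcal{P}_o(G)$.
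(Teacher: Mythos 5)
Your proof is correct and follows essentially the same route as the paper: compute the quadratic form $\Phi^TL(\rho)\Phi=(\nabla_G\Phi,\nabla_G\Phi)_\rho$, use positivity of $\theta_{ij}(\rho)$ on $\mathcal{P}_o(G)$ and connectedness of $G$ to identify $\ker L(\rho)$ with the constants, and conclude that the range is $T_\rho\mathcal{P}_o(G)$. The only cosmetic difference is that you obtain the range as the orthogonal complement of the kernel via symmetry, whereas the paper combines the dimension count $\dim\textrm{Ran}(L(\rho))=n-1$ with the inclusion $\textrm{Ran}(L(\rho))\subset T_\rho\mathcal{P}_o(G)$; these are interchangeable pieces of linear algebra.
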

\begin{proof}
If $\rho\in \mathcal{P}_o(G)$, all diagonal entries of the weighted matrix $\Theta(\rho)$ is nonzero. 
Consider
$$\Phi^TL(\rho)\Phi=\frac{1}{2}\sum_{(i,j)\in E}(\Phi_i-\Phi_j)^2\theta_{ij}(\rho)=0\ ,$$ 
then $\Phi_i=\Phi_j$ for any $(i,j)\in E$. Since $G$ is connected, $\Phi_i=\textrm{constant}$ for any $i\in V$. Thus $0$ is a simple eigenvalue of $L(\rho)$ and $ L(\rho)(1,\cdots, 1)^T=0$, i.e. $(1,\cdots, 1)^T\in \textrm{ker}(L(\rho))$. 
Hence
$\textrm{dim}(\mathbb{R}^n/{\textrm{ker}(L(\rho))})=\textrm{dim}(\textrm{Ran}(L(\rho)))=\textrm{dim}(\mathcal{T}_\rho\mathcal{P}_o(G))=n-1$.
Since $\sum_{i=1}^n\textrm{div}_G(\rho\nabla_G\Phi)_i=0$, we have $\textrm{Ran}(L(\rho))\subset \mathcal{T}_\rho\mathcal{P}_o(G)$. Therefore $$(\mathbb{R}^n/{\textrm{ker}(L(\rho))})\cong\textrm{Ran}(L(\rho))=\mathcal{T}_\rho\mathcal{P}_o(G)\ ,$$ which proves the lemma.
\end{proof}

Based on Lemma \ref{lem2}, we write 
$$L(\rho)=T\begin{pmatrix}
0 & & &\\
& \lambda_{sec}(L(\rho))& &\\
& & \ddots & \\
& & & {\lambda_{\max}(L(\rho))}
\end{pmatrix}T^{-1} \ ,$$
where $0<\lambda_{sec}(L(\rho))\leq\cdots\leq \lambda_{\max}(L(\rho))$ are eigenvalues of $L(\rho)$ arranged in ascending order, and $T$ is its corresponding eigenvector matrix. 
We denote the pseudo-inverse of $L(\rho)$ by
$$L^{-1}(\rho)=T\begin{pmatrix}
0 & & &\\
& \frac{1}{\lambda_{sec}L(\rho)}& &\\
& & \ddots & \\
& & & \frac{1}{\lambda_{max}L(\rho)}
\end{pmatrix}T^{-1} \ .$$
Then matrix $L^{-1}(\rho)$ endows an inner product on $T_\rho\mathcal{P}_o(G)$. 
\begin{definition}\label{d9}
For any two tangent vectors $\sigma^1,\sigma^2\in T_\rho\mathcal{P}_o(G)$, 
define the inner product 
$g :T_\rho\mathcal{P}_o(G)\times T_\rho\mathcal{P}_o(G)  \rightarrow \mathbb{R}$ by
\begin{equation*}\begin{split}\label{formula}
g(\sigma^1,\sigma^2):=(\Phi^1)^TL(\rho)(\Phi^2)=(\sigma^1)^T L^{-1}(\rho) \sigma^2\ ,\end{split} 
\end{equation*}
where $\sigma^1=L(\rho)\Phi^1$ and $\sigma^2=L(\rho)\Phi^2$. 
\end{definition} 
Hence metric \eqref{metric} is equivalent to 
 \begin{equation}\label{new}
\left(\mathcal{W}_2( \rho^0,  \rho^1)\right)^2=\inf\{\int_0^1 \dot\rho^T L^{-1}(\rho)\dot \rho ~dt~:~ 
\quad  \rho(0)= \rho^0\ ,\quad  \rho(1)= \rho^1\ , \quad  \rho\in\mathcal{C}\}\ ,
\end{equation} 
where $\mathcal{C}$ is the set of all continuously differentiable curves $ \rho(t): [0,1]\rightarrow \mathcal{P}_o(G)$.  From \eqref{new}, it is clear that $(\mathcal{P}_o(G), \mathcal{W}_2)$ is a finite dimensional Riemannian manifold.

\subsection{Gradient flows on finite graphs}
We now consider the gradient flow of $\mathcal{F}\colon \mathcal{P}(G)\rightarrow \mathbb{R}$, $\mathcal{F}\in C^2$, on the Riemannian manifold ($\mathcal{P}_o(G), \mathcal{W}_2$). 
 \begin{theorem}[Gradient flows]\label{Derive}
For a finite graph $G$ and a constant $\beta> 0$, the gradient flow of $\mathcal{F}(\rho)$ on $(\mathcal{P}_o(G), \mathcal{W}_{2})$ is 
$$\frac{d\rho}{dt}=-L(\rho)\nabla_\rho\mathcal{F}(\rho)\ ,$$
i.e.
\begin{equation} \label{a1}
\begin{split}
\frac{ d \rho_i}{dt}=\sum_{j\in N(i)}\omega_{ij}\theta_{ij}(\rho)\big( \frac{\partial}{\partial  \rho_j}\mathcal{F}(\rho) -\frac{\partial }{\partial  \rho_i} \mathcal{F}(\rho)\big)\ . 
\end{split}
\end{equation}
\end{theorem}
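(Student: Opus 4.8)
The plan is to identify the Riemannian gradient of $\mathcal{F}$ with respect to the metric tensor $g$ of Definition \ref{d9}, and then to unfold that gradient into coordinates. Recall that on any Riemannian manifold the gradient flow of $\mathcal{F}$ is $\frac{d\rho}{dt}=-\mathrm{grad}_g\mathcal{F}(\rho)$, where the Riemannian gradient $\mathrm{grad}_g\mathcal{F}\in T_\rho\mathcal{P}_o(G)$ is the unique tangent vector determined by
\[
g(\mathrm{grad}_g\mathcal{F},\sigma)=\langle \nabla_\rho\mathcal{F},\sigma\rangle\qquad\textrm{for all }\sigma\in T_\rho\mathcal{P}_o(G),
\]
with $\nabla_\rho\mathcal{F}=(\partial\mathcal{F}/\partial\rho_i)_{i=1}^n$ the ordinary Euclidean gradient and $\langle\cdot,\cdot\rangle$ the Euclidean inner product. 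Since the differential $d\mathcal{F}(\sigma)$ along a tangent direction $\sigma$ equals $\langle\nabla_\rho\mathcal{F},\sigma\rangle$, this identity is exactly the defining property of $\mathrm{grad}_g\mathcal{F}$.

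First I would solve the characterizing identity for $\mathrm{grad}_g\mathcal{F}$. By Lemma \ref{lem2} every tangent vector has the form $L(\rho)\Psi$, so write $\mathrm{grad}_g\mathcal{F}=L(\rho)\Psi$ and test against an arbitrary $\sigma=L(\rho)\Phi$. Using Definition \ref{d9}, the left-hand side becomes $g(L(\rho)\Psi,L(\rho)\Phi)=\Psi^T L(\rho)\Phi$, while the right-hand side is $(\nabla_\rho\mathcal{F})^T L(\rho)\Phi$. By symmetry of $L(\rho)$ these read $\Phi^T L(\rho)\Psi$ and $\Phi^T L(\rho)\nabla_\rho\mathcal{F}$ respectively; as $\Phi$ ranges over $\mathbb{R}^n$ this forces $L(\rho)\Psi=L(\rho)\nabla_\rho\mathcal{F}$, hence $\mathrm{grad}_g\mathcal{F}=L(\rho)\nabla_\rho\mathcal{F}$ and $\frac{d\rho}{dt}=-L(\rho)\nabla_\rho\mathcal{F}(\rho)$.

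It then remains to expand this into the componentwise form \eqref{a1}. Here I would invoke the identity $L(\rho)\Phi=-\textrm{div}_G(\rho\nabla_G\Phi)$ from Lemma \ref{lem2} with $\Phi=\nabla_\rho\mathcal{F}$, so that $-L(\rho)\nabla_\rho\mathcal{F}=\textrm{div}_G(\rho\,\nabla_G(\nabla_\rho\mathcal{F}))$. Substituting the explicit formulas for the divergence and the potential vector field, $(\nabla_G\Phi)_{ij}=\sqrt{\omega_{ij}}(\Phi_i-\Phi_j)$ and $\textrm{div}_G(\rho v)|_i=-\sum_{j\in N(i)}\sqrt{\omega_{ij}}\,v_{ij}\theta_{ij}(\rho)$, the two factors $\sqrt{\omega_{ij}}$ combine into $\omega_{ij}$, and the $i$-th component collapses to $\sum_{j\in N(i)}\omega_{ij}\theta_{ij}(\rho)\big(\tfrac{\partial}{\partial\rho_j}\mathcal{F}-\tfrac{\partial}{\partial\rho_i}\mathcal{F}\big)$, which is precisely \eqref{a1}.

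The step requiring the most care is the degeneracy of the metric, namely that $L(\rho)$ is invertible only as a pseudo-inverse on $T_\rho\mathcal{P}_o(G)$ because its kernel consists of the constant vectors. This is what legitimizes the reduction above: the Euclidean gradient $\nabla_\rho\mathcal{F}$ need not itself lie in the tangent space, but any additive constant in it is annihilated by $L(\rho)$, so $L(\rho)\nabla_\rho\mathcal{F}$ is well defined and, by the integration by parts formula, automatically satisfies $\sum_{i=1}^n(L(\rho)\nabla_\rho\mathcal{F})_i=0$. This confirms that the right-hand side is a genuine element of $T_\rho\mathcal{P}_o(G)$ and that the flow preserves the simplex, completing the derivation.
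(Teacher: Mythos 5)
Your proposal is correct and follows essentially the same route as the paper: both characterize the gradient via the duality $g(\cdot,\sigma)=\textrm{d}\mathcal{F}\cdot\sigma$ tested against arbitrary tangent vectors $\sigma=L(\rho)\Phi$ from Lemma \ref{lem2}, use the arbitrariness of $\Phi$ to identify $-L(\rho)\nabla_\rho\mathcal{F}$ as the flow direction, and unfold $\textrm{div}_G(\rho\nabla_G(\nabla_\rho\mathcal{F}))$ into the componentwise ODE. Your closing remark on the kernel of $L(\rho)$ and the tangency of the right-hand side is a worthwhile clarification that the paper leaves implicit.
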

\begin{proof}[Proof]
For any $\sigma\in T_\rho\mathcal{P}_o(G)$, there exists $\Phi\in \mathbb{R}^n$, such that 
$\sigma=-\textrm{div}_G(\rho\nabla_G\Phi)=L(\rho)\Phi$. By Definition \ref{formula},   
\begin{equation}\label{d1}
\begin{split}
(\frac{d\rho}{dt}, \sigma)_\rho=&=\frac{d\rho}{dt}^TL^{-1}(\rho)\sigma=\sum_{i=1}^n \frac{d\rho_i}{dt}\Phi_i\ .
\end{split}
\end{equation}
On the right hand side, 
\begin{equation}\label{new1}
\begin{split}
\textrm{d}\mathcal{F}(\rho)\cdot{\sigma}
=&\sum_{i=1}^n\frac{\partial}{\partial  \rho_i}\mathcal{F}(\rho)\cdot \sigma_i= F(\rho)^TL(\rho)\Phi \\
=&\Phi^TL(\rho)F(\rho)=-\sum_{i=1}^n \Phi_i \textrm{div}_G(\rho\nabla_GF(\rho))_i\ , \\
\end{split}
\end{equation}
where we denote $F(\rho)=(F_i(\rho))_{i=1}^n=(\frac{\partial }{\partial  \rho_i}\mathcal{F}(\rho))_{i=1}^n$.
Combining \eqref{d1}, \eqref{new1}, and the definition of gradient flow on manifold, we obtain  
\begin{equation*}
\begin{split}
0=&(\frac{d\rho}{dt}, \sigma)_{\rho}+\textrm{d}\mathcal{F}(\rho)\cdot{\sigma} \\
=&\sum_{i=1}^n \{\frac{d \rho_i}{dt}-\textrm{div}_G(\rho\nabla_G F(\rho)) \}\Phi_i\ .
\end{split}
\end{equation*}
Since $(\Phi_i)_{i=1}^n\in \mathbb{R}^n$ is arbitrary, we must have   
\begin{equation*}
\frac{d \rho_i}{dt}+\sum_{j\in N(i)} \omega_{ij} \theta_{ij}(\rho)\big(F_i(\rho)-F_j( p)\big)=0
\end{equation*}
for all $i\in V$, which is \eqref{a1}.
\end{proof}
Clearly,  \eqref{a1} is the discrete analog of Wasserstein gradient flow in continuous space
\begin{equation*}
\frac{\partial \rho}{\partial t}=\nabla\cdot (\rho\nabla \frac{\delta}{\delta \rho}\mathcal{F}(\rho))\ ,
\end{equation*}
where $\frac{\delta}{\delta \rho}\mathcal{F}$ is the first variation of $\mathcal{F}$. In what follows, we consider a particular free energy, which contains a quadratic interaction energy, a linear potential and the Boltzmann entropy:
 $$\mathcal{F}(\rho)=\frac{1}{2}\rho^T\mathbb{W}\rho+\mathbb{V}^T\rho+\beta\sum_{i=1}^n\rho_i\log\rho_i\ ,$$
 where $\mathbb{V}\in \mathbb{R}^n$, and $\mathbb{W}\in \mathbb{R}^{n\times n}$ is a symmetric matrix.  
  Its gradient flow becomes
\begin{equation}\label{FPE}
\frac{d\rho_i}{dt}=\sum_{j\in N(i)}\omega_{ij}\theta_{ij}(\mathbb{V}_j-\mathbb{V}_i+(\mathbb{W}\rho)_j-(\mathbb{W}\rho)_i)+\sum_{j\in N(i)}\omega_{ij}\theta_{ij}(\log \rho_j-\log\rho_i)\ , 
\end{equation}
 which is the discrete analog of nonlinear FPE 
 $$\frac{\partial \rho}{\partial t}=\nabla\cdot [\rho\nabla (\mathbb{V}(x)+\int_{\mathbb{R}^d}\mathbb{W}(x,y)\rho(t,y)dy)]+\Delta \rho\ . $$
So we call \eqref{FPE} nonlinear FPE on graphs. A particular attention is given to $$\sum_{j\in N(i)}\omega_{ij}\theta_{ij}(\rho)(\log \rho_j-\log\rho_i)\ ,$$
which can be viewed as a nonlinear representation of Laplacian operator for $\rho$. We shall show that such a nonlinearity is the key for many dynamical properties of \eqref{FPE} later on.

\section{Entropy dissipation}\label{dissipation}
In this section, we focus on the convergence properties of FPE \eqref{FPE}. 
Denote the nonlinear Gibbs measure
$$\rho^\infty_i=\frac{1}{K}e^{-\frac{(\mathbb{W}\cdot \rho^\infty)_i+\mathbb{V}_i}{\beta}}\ ,\quad\textrm{where}\quad K=\sum_{j=1}^n e^{-\frac{(\mathbb{W}\cdot \rho^\infty)_j+\mathbb{V}_j}{\beta}}\ .$$
It is easy to verify that the Gibbs measure is the equilibrium of \eqref{FPE}. Our main theorem here is to show how fast $\rho(t)$, the solution of FPE \eqref{FPE}, converges to $\rho^\infty$.
\begin{theorem}\label{th12}
Assume $\rho^0\in \mathcal{P}_o(G)$ and $\mathcal{F}(\rho)$ is strictly positive definite in $\mathcal{P}(G)$, then
there exists a constant $C>0$,
such that 
\begin{equation}\label{exp}
\mathcal{F}( \rho(t))-\mathcal{F}( \rho^{\infty})\leq e^{-Ct}(\mathcal{F}(\rho^0)-\mathcal{F}( \rho^{\infty}))\ .
\end{equation}
Furthermore
$$C=2m(\rho^0)\lambda_{sec}(\hat{L})\lambda_{\min}(\textrm{Hess}\mathcal{F}) \frac{1}{(r+1)^2}\ ,$$
where
$$r=\sqrt{2} \textrm{Deg}(G)\max_{(i,j)\in E}\omega_{ij} \frac{\|\textrm{Hess}\mathcal{F}\|_1}{\lambda_{\min}(\textrm{Hess}\mathcal{F})^{\frac{3}{2}}}\frac{1-m(\rho^0)}{m(\rho^0)^2}\frac{\lambda_{\max}(\hat{L})}{\lambda_{sec}(\hat{L})^2} \sqrt{\mathcal{F}(\rho^0)-\mathcal{F}(\rho^\infty)}\ ,$$
 $Deg(G)$ is the maximal degree of graph, $\hat{L}=D^TD$ is the graph Laplacian matrix, $\lambda_{sec}(\hat{L})$ and $\lambda_{\max}(\hat{L})$ are the second smallest and the largest eigenvalue of $\hat{L}$ respectively,  
 $$\|\textrm{Hess}\mathcal{F}\|_1=\sup_{\rho\in \mathcal{P}(G)} \|\textrm{Hess}\mathcal{F}(\rho)\|_1\ , \qquad  \lambda_{\min}(\textrm{Hess}\mathcal{F})=\min_{\rho\in \mathcal{P}(G)}  \lambda_{\min}(\textrm{Hess}\mathcal{F}(\rho))\ , $$
 and $$m(\rho_0)=\frac{1}{2}(\frac{1}{1+(2M)^{\frac{1}{\beta}}})^{n-2}\min\{\frac{1}{1+(2M)^{\frac{1}{\beta}}}, \min_{i\in V}{\rho_i^0}\}>0\ ,$$ 
with $M=e^{2\sup_{i\in V, j\in V}|\mathbb{V}_i|+|\mathbb{W}_{ij}|}$.
\end{theorem}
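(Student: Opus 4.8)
The plan is to run the gradient-flow entropy-dissipation scheme while tracking every constant explicitly. Write $F(\rho)=\big(\partial_{\rho_i}\mathcal F(\rho)\big)_{i=1}^n$, so $F_i=(\mathbb W\rho)_i+\mathbb V_i+\beta(\log\rho_i+1)$, and observe that the Gibbs relation forces $F(\rho^\infty)$ to be a constant vector $c\,\mathbf 1$. Differentiating $\mathcal F$ along \eqref{FPE} via the integration-by-parts formula of Section \ref{review} gives the dissipation identity
\begin{equation*}
\frac{d}{dt}\mathcal F(\rho(t))=\sum_{i=1}^n F_i\,\dot\rho_i=-\frac12\sum_{(i,j)\in E}\omega_{ij}\,\theta_{ij}(\rho)\,(F_i-F_j)^2=:-I(\rho)\le 0.
\end{equation*}
The theorem then reduces to a gradient-dominance (\L{}ojasiewicz-type) inequality $I(\rho)\ge C\,(\mathcal F(\rho)-\mathcal F(\rho^\infty))$ along the trajectory; given this, $h(t):=\mathcal F(\rho(t))-\mathcal F(\rho^\infty)\ge0$ satisfies $\dot h\le -C h$, and Gr\"onwall's inequality yields \eqref{exp}.

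Before proving that inequality I would establish an a priori confinement of the trajectory: a uniform lower bound $\rho_i(t)\ge \mathrm{const}>0$, hence $\theta_{ij}(\rho(t))=\tfrac12(\rho_i+\rho_j)\ge m(\rho^0)$ for all $t\ge0$. This is exactly where the nonlinear term $\sum_{j}\omega_{ij}\theta_{ij}(\log\rho_j-\log\rho_i)$ is decisive: as any coordinate $\rho_i\to0$ the contribution $-\log\rho_i\to+\infty$ acts as a repelling barrier, so $\mathcal P_o(G)$ is invariant. I expect the explicit value of $m(\rho^0)$ to arise from a worst-case barrier estimate propagated along a path in the connected graph $G$, which should produce the factor $(1+(2M)^{1/\beta})^{-(n-2)}$, with $M=e^{2\sup(|\mathbb V_i|+|\mathbb W_{ij}|)}$ controlling the bounded drift $F_i-F_j$ away from the logarithm. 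Turning this qualitative invariance into a quantitative, time-uniform bound is, I anticipate, one of the two main obstacles.

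For the gradient-dominance inequality I would combine two one-sided estimates, both written through the mean-zero projection $P$ onto $T_\rho\mathcal P_o(G)$. Since $F_i-F_j=\big(F-F(\rho^\infty)\big)_i-\big(F-F(\rho^\infty)\big)_j$, the confinement and the discrete Poincar\'e inequality for the unweighted Laplacian $\hat L=D^TD$ give
\begin{equation*}
I(\rho)\ge m(\rho^0)\,\tfrac12\sum_{(i,j)\in E}\omega_{ij}(F_i-F_j)^2=m(\rho^0)\,F^T\hat L F\ge m(\rho^0)\,\lambda_{sec}(\hat L)\,\big\|P\big(F-F(\rho^\infty)\big)\big\|^2 .
\end{equation*}
On the energy side, strong convexity with modulus $\lambda_{\min}(\mathrm{Hess}\,\mathcal F)$ yields the Polyak--\L{}ojasiewicz bound $\mathcal F(\rho)-\mathcal F(\rho^\infty)\le \frac{1}{2\lambda_{\min}(\mathrm{Hess}\,\mathcal F)}\big\|P\big(F-F(\rho^\infty)\big)\big\|^2$, using $F(\rho)-F(\rho^\infty)=\big(\int_0^1\mathrm{Hess}\,\mathcal F\,ds\big)(\rho-\rho^\infty)$ and $P(\rho-\rho^\infty)=\rho-\rho^\infty$. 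Chaining the two produces the leading constant $2\,m(\rho^0)\,\lambda_{sec}(\hat L)\,\lambda_{\min}(\mathrm{Hess}\,\mathcal F)$.

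It remains to recover the correction factor $(1+r)^{-2}$, the genuinely nonlinear part and the second main obstacle. The two estimates above are not exactly compatible: replacing the $\rho$-weighted form $\sum\omega_{ij}\theta_{ij}(F_i-F_j)^2$ by its uniform-weight surrogate, and comparing $\|P(F-F(\rho^\infty))\|$ with the exact Hessian action, both introduce errors proportional to the distance of $\rho$ from $\rho^\infty$. I would bound that distance by strong convexity, $\|\rho-\rho^\infty\|\le\sqrt{2(\mathcal F(\rho)-\mathcal F(\rho^\infty))/\lambda_{\min}(\mathrm{Hess}\,\mathcal F)}\le\sqrt{2(\mathcal F(\rho^0)-\mathcal F(\rho^\infty))/\lambda_{\min}(\mathrm{Hess}\,\mathcal F)}$ (using that $\mathcal F$ decreases), and then show the induced relative error is at most $r$: the geometric prefactor $\sqrt2\,\mathrm{Deg}(G)\max_{(i,j)}\omega_{ij}$ from the edge combinatorics, $\lambda_{\max}(\hat L)/\lambda_{sec}(\hat L)^2$ from inverting $\hat L$ on the tangent space, $\|\mathrm{Hess}\,\mathcal F\|_1/\lambda_{\min}(\mathrm{Hess}\,\mathcal F)^{3/2}$ from the Hessian comparison (the half-power being precisely the $\lambda_{\min}^{-1/2}$ of the distance bound), and $(1-m(\rho^0))/m(\rho^0)^2$ from the spread of the weights $\theta_{ij}$ about $m(\rho^0)$. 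A perturbation inequality of the form ``$\|a-b\|\le r\|b\|\ \Rightarrow\ \|a\|^2\ge\|b\|^2/(1+r)^2$'' then converts this relative error into the stated factor, giving $C=2\,m(\rho^0)\,\lambda_{sec}(\hat L)\,\lambda_{\min}(\mathrm{Hess}\,\mathcal F)\,(1+r)^{-2}$ and closing the Gr\"onwall argument.
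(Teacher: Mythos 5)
Your first three paragraphs already constitute a complete proof, and by a genuinely different route than the paper's. The paper does not prove a single global gradient-dominance inequality: it splits the trajectory into a far regime $B_1=\{\mathcal{F}(\rho)-\mathcal{F}(\rho^\infty)\ge x\}$, where it uses the first-derivative dissipation exactly as you do, and a near regime $B_2$, where it instead establishes the Bakry--\'Emery-type bound $\frac{d^2}{dt^2}\mathcal{F}\ge -r_2(x)\frac{d}{dt}\mathcal{F}$ by controlling the cubic term $\dot\rho^TL^{-1}(\rho)L(\dot\rho)L^{-1}(\rho)\dot\rho$ through $\|\dot\rho\|_\infty=O(\sqrt{x})$; the constant $(1+r)^{-2}$ then emerges from optimizing the crossover threshold via $\max_{x>0}\min\{C_1x,\,C_2-C_3\sqrt{x}\}$. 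Your route is simpler and in fact sharper: the chain $-\frac{d}{dt}\mathcal{F}=F^TL(\rho)F\ge m(\rho^0)\,F^T\hat L F\ge m(\rho^0)\lambda_{sec}(\hat L)\|\mathbb{P}F\|_2^2\ge 2m(\rho^0)\lambda_{sec}(\hat L)\lambda_{\min}(\textrm{Hess}\mathcal{F})\,(\mathcal{F}(\rho)-\mathcal{F}(\rho^\infty))$ is exact (the last step is precisely the Polyak--\L{}ojasiewicz computation that the paper itself performs inside Lemma \ref{LC1}, namely $\|\mathbb{P}F\|_2\ge \frac{x}{\|\rho-\rho^\infty\|_2}+\frac{1}{2}\lambda_{\min}\|\rho-\rho^\infty\|_2\ge\sqrt{2x\lambda_{\min}}$ with $x=\mathcal{F}(\rho)-\mathcal{F}(\rho^\infty)$), so Gr\"onwall gives the rate $C_2=2m(\rho^0)\lambda_{sec}(\hat L)\lambda_{\min}(\textrm{Hess}\mathcal{F})$ on the whole trajectory, which dominates the stated $C=C_2/(1+r)^2$ and hence implies the theorem a fortiori. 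The only substantive ingredient you leave as a sketch is the confinement bound $\rho_i(t)\ge m(\rho^0)$; your barrier intuition is the right one, and the paper's Lemma 6 realizes it by showing that the boundary of a nested family of sets $\{\sum_{r=1}^l\rho_{i_r}\le 1-\epsilon_l\}$ is a repeller, with $\epsilon_l=\epsilon_{l-1}/(1+(2M)^{1/\beta})$ producing exactly the exponent $n-2$ in $m(\rho^0)$.

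Your fourth paragraph, however, rests on a misconception. There is no incompatibility between your two estimates and therefore no relative error of size $r$ to absorb: the factor $(1+r)^{-2}$ is not a correction to the gradient-dominance chain but an artifact of the paper's two-regime construction, where $r=C_3/\sqrt{C_1C_2}$ measures the cubic (metric-variation) term against the quadratic (Hessian) term near equilibrium. The perturbation inequality you invoke ($\|a-b\|\le r\|b\|\Rightarrow\|a\|^2\ge\|b\|^2/(1+r)^2$) is never instantiated with concrete $a$ and $b$, and as written it would not reproduce the paper's constant, whose ingredients ($\lambda_{\max}(\hat L)/\lambda_{sec}(\hat L)^2$, the factor $(1-m(\rho^0))/m(\rho^0)^2$, the $\|\dot\rho\|_\infty$ bound) all come from estimating $L(\dot\rho)$ and $L^{-1}(\rho)$ in the second-derivative formula. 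So either delete that paragraph and conclude with the larger constant $C_2$, or, if you want the paper's exact expression, you must introduce the second derivative of $\mathcal{F}$ along the flow and the region splitting; the paragraph as it stands proves nothing and slightly undermines an otherwise correct argument.
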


Before giving the complete proof, we want to point out the main difficulties that we must overcome. 
Since $\mathcal{F}(\rho)$ is strictly convex and $\rho^\infty$ is its unique minimizer, it is not hard to show  
$\rho(t)$ converging to $\rho^\infty$. In general, the rate of convergence is determined by comparing the ratio between the first and second derivative of $\mathcal{F}(\rho(t))$ along the gradient flow. If one can find a constant $C>0$, such that
 \begin{equation}\label{C} 
\frac{d^2}{dt^2}\mathcal{F}(\rho(t))\geq -C\frac{d}{dt}\mathcal{F}(\rho(t))\ 
\end{equation}
holds for all $t\geq 0$, 
one can obtain, by integration, 
$$\frac{d}{dt}[\mathcal{F}(\rho^\infty)-\mathcal{F}(\rho(t))]\geq -C[\mathcal{F}(\rho^\infty)-\mathcal{F}(\rho(t))]\ .$$
Then \eqref{exp} is proved following the Gronwall's inequality. 

For FPE \eqref{FPE}, the first derivative of $\mathcal{F}$ along \eqref{FPE} gives
$$ \frac{d}{dt}\mathcal{F}(\rho(t))=F(\rho)^T\dot\rho=-F(\rho)^TL(\rho)F(\rho)=-\dot\rho^TL^{-1}(\rho)\dot\rho\ ,$$
while the second derivative is
\begin{equation*}
\begin{split}
 \frac{d^2}{dt^2}\mathcal{F}(\rho(t))=&2~\dot\rho^T\textrm{Hess}\mathcal{F}(\rho)L(\rho)F(\rho)-F(\rho)^TL(\dot\rho)F(\rho)\\
 =&2~\dot\rho^T\textrm{Hess}\mathcal{F}(\rho)\dot\rho-\dot\rho^TL^{-1}(\rho)L(\dot\rho)L^{-1}(\rho) \dot \rho\ ,\\
 \end{split}
\end{equation*}
where $L(\dot\rho)=D^T\textrm{diag}(\theta_{ij}(\dot \rho))D$. 

Comparing $\frac{d}{dt}\mathcal{F}(\rho(t))$ with $\frac{d^2}{dt^2}\mathcal{F}(\rho(t))$, we find
\begin{equation}\label{EC}
\begin{split}
C:=&\inf_{\rho\in B(\rho^0)}~\frac{2\dot\rho^T\textrm{Hess}\mathcal{F}(\rho)\dot\rho}{\dot\rho^TL^{-1}(\rho)\dot\rho}-\frac{\dot\rho^TL^{-1}(\rho)L(\dot\rho)L^{-1}(\rho) \dot\rho}{\dot\rho^TL^{-1}(\rho)\dot\rho}\ .\\
&\hspace{2cm} \textrm{Quadratic} \hspace{2cm}\textrm{Cubic}
\end{split}
\end{equation}

However, it is not simple to get an estimation of $C$. In the continuous case, there are only a few examples \cite{carrillo2003kinetic}, depending on special interaction potentials $\mathbb{W}$,
that allow us to find $C$ explicitly. In the discrete space, we overcome this difficulty by borrowing techniques from dynamical systems. If $\rho$ is close enough to the equilibrium ($\dot\rho$ is near zero), estimating $C$ in \eqref{EC} becomes possible. This is because the cubic term of $\dot\rho$ in \eqref{EC} becomes one order smaller than $\dot\rho^TL^{-1}(\rho)\dot \rho$, and the dominating quadratic term can be estimated by a solvable eigenvalue 
problem. 

Following this idea, the sketch of proof is as follows: In lemma 6, we first show that the solution of FPE \eqref{FPE} is well defined, and it converges to $\rho^\infty$. In fact, it can be shown $\rho\in B(\rho^0)$, a compact subset in $\mathcal{P}_o(G)$. Then we estimate the convergence rate in $B(\rho^0)$
by two parts, depending on a parameter $x>0$ controlling the closeness between $\rho$ and $\rho^\infty$. If $\rho(t)$ is far away from $\rho^\infty$, the dissipation formula 
$\frac{d}{dt}\mathcal{F}(\rho)=-F(\rho)^TL(\rho)F(\rho)<0$
 gives one convergence rate $r_1(x)$; If $\rho(t)$ is close to $\rho^\infty$, estimating \eqref{EC} is possible. Thus \eqref{C} implies another rate $r_2(x)$. Combing two together, we find a lower bound of dissipation rate $C$ by calculating
$$\max_{x>0}\min\{r_1(x), r_2(x)\}\ .$$

\begin{lemma}
For any initial condition $ \rho^0\in\mathcal{P}_o(G)$, equation \eqref{FPE} has a unique solution $\rho(t): [0,\infty)\rightarrow \mathcal{P}_o(G)$.  Moreover,
\begin{itemize} 
\item[(i)] There exists a constant  $m(\rho_0)>0$, such that $$\rho_i(t)\geq m(\rho_0)>0\ ,$$for all $i\in V$ and $t\geq 0$.
\item [(ii)]$$\lim_{t\rightarrow+\infty}\rho(t)=\rho^\infty\ .$$
\end{itemize}
\end{lemma}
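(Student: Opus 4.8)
The plan is to prove existence, uniqueness, the uniform positive lower bound (i), and convergence (ii) in three coordinated stages. First I would establish local existence and uniqueness by observing that the right-hand side of \eqref{FPE} is $C^1$ (indeed smooth) as long as $\rho$ stays in the interior $\mathcal{P}_o(G)$, where $\log\rho_i$ and $\theta_{ij}(\rho)$ are well behaved; so the standard Picard--Lindel\"of theorem yields a unique maximal solution. I would then check that the flow preserves the simplex constraints: summing \eqref{FPE} over $i$ and using the skew-symmetry $v_{ij}=-v_{ji}$ (equivalently the integration-by-parts identity already proved) gives $\frac{d}{dt}\sum_i\rho_i=0$, so $\sum_i\rho_i(t)\equiv 1$ is conserved. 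Thus the solution stays on the affine hyperplane containing $\mathcal{P}(G)$, and the only way it could fail to exist globally is by reaching the boundary $\partial\mathcal{P}_o(G)$, i.e.\ by some coordinate $\rho_i(t)\to 0$.

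The heart of the argument, and the main obstacle, is ruling out this boundary escape by proving the uniform lower bound in (i). The key mechanism is the nonlinear Laplacian term $\sum_{j\in N(i)}\omega_{ij}\theta_{ij}(\log\rho_j-\log\rho_i)$, which acts as a repelling force near the boundary: if some $\rho_i$ becomes small while a neighbor $\rho_j$ stays bounded below, then $\log\rho_j-\log\rho_i\to+\infty$, forcing $\dot\rho_i>0$ and pushing $\rho_i$ back into the interior. Concretely, I would examine the node $i_*$ realizing the minimum $\min_{k}\rho_k(t)$ at a given time and bound the drift $\dot\rho_{i_*}$ from below. Since the potential and interaction terms $\mathbb{V}_j-\mathbb{V}_i+(\mathbb{W}\rho)_j-(\mathbb{W}\rho)_i$ are uniformly bounded over the compact simplex (this is where the constant $M=e^{2\sup(|\mathbb{V}_i|+|\mathbb{W}_{ij}|)}$ enters), they cannot overwhelm the logarithmic blow-up. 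Connectedness of $G$ guarantees that at least one neighbor of $i_*$ has larger mass, and iterating this comparison down a spanning structure of the graph yields the explicit product bound $m(\rho^0)=\tfrac12\bigl(\tfrac{1}{1+(2M)^{1/\beta}}\bigr)^{n-2}\min\{\tfrac{1}{1+(2M)^{1/\beta}},\min_i\rho_i^0\}$. This quantitative invariant-region estimate is the delicate step: it simultaneously confines $\rho(t)$ to a compact set $B(\rho^0)\subset\mathcal{P}_o(G)$ bounded away from the boundary and thereby upgrades the local solution to a global one.

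With global existence and the lower bound in hand, convergence (ii) follows from the gradient-flow structure combined with a LaSalle-type argument. I would use that $\mathcal{F}(\rho(t))$ is a strict Lyapunov function: by the dissipation identity $\frac{d}{dt}\mathcal{F}(\rho(t))=-F(\rho)^TL(\rho)F(\rho)=-(\nabla_G F,\nabla_G F)_\rho\le 0$, the energy is nonincreasing, and since $\mathcal{F}$ is bounded below on the compact set $B(\rho^0)$, the limit $\lim_{t\to\infty}\mathcal{F}(\rho(t))$ exists. The dissipation vanishes precisely when $\nabla_G F(\rho)=0$, i.e.\ when $F_i(\rho)=F_j(\rho)$ across every edge; connectedness then forces $F(\rho)$ to be constant on $V$, which is exactly the critical-point condition characterizing the Gibbs measure $\rho^\infty$. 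By the LaSalle invariance principle the $\omega$-limit set of the bounded trajectory is contained in the set of equilibria, and strict positive-definiteness of $\mathcal{F}$ (hypothesis of Theorem \ref{th12}, and the relevant regime here) makes $\rho^\infty$ the unique equilibrium in $B(\rho^0)$, so $\rho(t)\to\rho^\infty$. The compactness afforded by (i) is what makes this limiting argument rigorous, which is why I would prove the lower bound before addressing convergence.
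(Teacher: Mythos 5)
Your overall architecture (Picard--Lindel\"of in the interior, mass conservation, an invariant region bounded away from $\partial\mathcal{P}_o(G)$ driven by the logarithmic repulsion, then a Lyapunov/LaSalle argument for convergence) matches the paper's, and your reduction of global existence to the lower bound (i) is correct. The gap is in the device you propose for proving (i). Tracking the single node $i_*$ realizing $\min_k\rho_k(t)$, and then ``iterating down a spanning structure,'' does not close. Two concrete failure modes: first, at the minimum node all the logarithmic terms $\omega_{i_*k}\theta_{i_*k}(\log\rho_k-\log\rho_{i_*})$ are nonnegative but, if an entire cluster of nodes containing $i_*$ is uniformly small (all masses $\approx\varepsilon$), these terms are only $O(\varepsilon)$ and can be cancelled by the bounded potential/interaction terms, which also scale like $\theta_{i_*k}=O(\varepsilon)$; you get at best $\dot\rho_{i_*}\geq -C\rho_{i_*}$, i.e.\ exponential decay protection, not a uniform-in-$t$ lower bound. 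Second, the node-by-node iteration along a spanning tree fails at non-minimal small nodes: a node $i$ adjacent to a well-massed node $j$ can still be dragged down by an even smaller neighbor $k$, since $\omega_{ik}\theta_{ik}(\log\rho_k-\log\rho_i)\leq -\tfrac{\omega_{ik}}{2}\rho_i\log(\rho_i/\rho_k)$ can be made arbitrarily negative (take $\rho_k=e^{-1/\rho_i^2}$), overwhelming the $O(\log(1/\rho_i))$ positive push from $j$.

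The paper's proof supplies exactly the missing idea: instead of individual nodes, it tracks the total mass of \emph{every} subset $A=\{i_1,\dots,i_l\}$ simultaneously, imposing the nested constraints $\sum_{r=1}^l\rho_{i_r}\leq 1-\epsilon_l$ with geometrically decreasing margins $\epsilon_l=\epsilon_{l-1}/(1+(2M)^{1/\beta})$. The point of summing the ODE over $i\in A$ is that all edges internal to $A$ cancel by antisymmetry, leaving only the boundary edges $A\to A^c$; on the boundary of the constraint set one has $\rho_j\leq\epsilon_l$ for $j\in A^c$ and $\rho_i\geq\epsilon_{l-1}-\epsilon_l$ for $i\in A$, whence $F_j-F_i\leq-\log 2$ on every such edge, and connectedness gives at least one such edge with $\theta_{i_*j_*}>0$, making the boundary a repeller. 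Taking $l=n-1$ then yields the individual bound $\rho_i\geq\epsilon_{n-1}=m(\rho^0)$. This subset-level cancellation is what neutralizes both failure modes above, and it is the step your proposal would need to adopt to be correct; your formula for $m(\rho^0)$ is right, but the route you describe does not reach it.
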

\begin{proof} First, we prove (i) by constructing a compact set $B(\rho^0)\subset \mathcal{P}_o(G)$. 
Denote a sequence of constants $\epsilon_l$, $l=0,1,\cdots, n$, 
\begin{equation*}
\epsilon_1=\frac{1}{2}\min\{\frac{1}{1+(2M)^{\frac{1}{\beta}}}, \min_{i\in V}{\rho_i^0}\} \quad \textrm{and}\quad \epsilon_l=\frac{\epsilon_{l-1}}{1+(2M)^{\frac{1}{\beta}}}, \quad \textrm{for $l=2, \cdots, n$ .}
\end{equation*}
Then we define 
\begin{equation*}
\begin{split}
B(\rho^0)=\{(\rho_i)_{i=1}^n\in \mathcal{P}(G)\mid&\sum_{r=1}^l\rho_{i_r}\leq 1-\epsilon_l ,~\textrm{for any $l\in \{1,\cdots, n-1\}$}, \\ & \textrm{and $1\leq i_1<\cdots<i_l\leq n$} \}.
\end{split}
\end{equation*}
We shall show that if $\rho^0\in B(\rho^0)$, then $\rho(t)\in B(\rho^0)$ for all $t\geq 0$. In other words, the boundary of $B(\rho^0)$ is a repeller for the ODE \eqref{FPE}. 
Assume $\rho(t_1)\in \partial B(\rho^0)$ at time $t_1$, this means that there exist indices $i_1, \cdots, i_l$ with $l\leq n-1$, such that
\begin{equation}\label{assumption}
\sum_{r=1}^l\rho_{i_r}(t_1)=1-\epsilon_l\ .
\end{equation}
We will show \begin{equation*}
\frac{d}{dt}\sum_{r=1}^l\rho_{i_r}(t)|_{t=t_1}<0\ .
\end{equation*}

Let $A=\{i_1, \cdots, i_l\}$ and $A^c=V\setminus A$. On one hand, for any $j\in A^c$,
\begin{equation}\label{1_t1}
\rho_j(t_1)\leq 1-\sum_{r=1}^l \rho_{i_r}(t_1)=\epsilon_l\ .
\end{equation}
On the other hand, since $\rho(t_1)\in B(\rho_0)$, for any $i\in A$, then $\sum_{k\in A\setminus \{i\}}\rho_{k}(t_1)\leq 1-\epsilon_{l-1}$, and from the assumption \eqref{assumption}, $\rho_i(t_1)+\sum_{k\in A\setminus \{i\}} \rho_k(t_1)=1-\epsilon_l$, we obtain
\begin{equation}\label{t2}
\rho_i(t_1)\geq 1-\epsilon_l-(1-\epsilon_{l-1})=\epsilon_{l-1}-\epsilon_l \ .
\end{equation}

Combining equations \eqref{1_t1} and \eqref{t2}, 
we know that for any $i\in A$ and $j\in A^c$,
\begin{equation}\label{t3}
\begin{split}
F_j(\rho)-F_i(\rho)&= (\mathbb{V}_j+(\mathbb{W}\rho)_j)-(\mathbb{V}_i+(\mathbb{W}\rho)_i)+\beta (\log \rho_j-\log \rho_i)\\
&\leq 2\sup_{i,j\in V}|\mathbb{V}_i+\mathbb{W}_{ij}|+\beta (\log \epsilon_l-\log (\epsilon_{l-1}-\epsilon_l))\\
&\leq -\log 2\ ,
\end{split}
\end{equation}
where the last inequality is from $\epsilon_l=\frac{\epsilon_{l-1}}{1+(2M)^{\frac{1}{\beta}}}$ and $M=\sup_{i,j\in V}e^{2(|\mathbb{V}_i|+|\mathbb{W}_{ij}|)}$.

Since the graph is connected, there exists $i_*\in A$, $j_*\in A^c\cap N(i_*)$ such that 
\begin{equation}\label{t4}
\sum_{i\in A}\sum_{j\in A^c\cap N(i^*)}\theta_{ij}(\rho(t_1))\geq \theta_{i_*j_*}(\rho(t_1))>0\ .
\end{equation}
By combining \eqref{t3} and \eqref{t4}, we have 
\begin{equation*}
\begin{split}
\frac{d}{dt}\sum_{r=1}^l\rho_{i_r}(t)|_{t=t_1}=&\sum_{i\in A}\sum_{j\in N(i)}\theta_{ij}(\rho)[F_j(\rho)-F_i(\rho)]|_{\rho=\rho(t_1)}   \\
=&\sum_{i\in A}\{\sum_{j\in A\cap N(i)} \theta_{ij}(\rho)[F_j(\rho)-F_i(\rho)]  \\  
&+~~~\sum_{j\in A^c\cap N(i)} \theta_{ij}(\rho)[F_j(\rho)-F_i(\rho)]   \}|_{\rho=\rho(t_1)}\\
=&\sum_{i\in A}\sum_{j\in A^c\cap N(i)} \theta_{ij}(\rho)[F_j(\rho)-F_i(\rho)]|_{\rho=\rho(t_1)}   \\
\leq &-\log 2\sum_{i\in A}\sum_{j\in A^c\cap N(i)} \theta_{ij}(\rho(t_1))\\
\leq &-\log 2~ \theta_{i_*j_*}(\rho(t_1))<0\ ,
\end{split}
\end{equation*}
where the third equality is from $\sum_{(i,j)\in A}\theta_{ij}(F_j-F_i)=0$. Therefore, we have $\rho(t)\in B(\rho^0)$, thus $\min_{i\in V, t>0}\rho(t)\geq m(\rho^0)$. (ii) can be proved similarly as in \cite{chow2012}, so we omit it here.
\end{proof}

\begin{lemma}\label{lem7}
For $\rho\in \mathcal{P}_o(G)$, then 
$$\lambda_{sec}(\hat{L})\cdot \min_{i\in V}\rho_i\leq   \lambda_{sec}(L(\rho)) \leq  \lambda_{\max}(L(\rho))\leq \max_{i\in V}\rho_i\cdot \lambda_{\max}(\hat{L})\ ,$$
and 
$$\frac{1}{\max_{i\in V}\rho_i\cdot \lambda_{\max}(\hat{L})}  \leq    \lambda_{sec}(L^{-1}(\rho))  \leq  \lambda_{\max}(L^{-1}(\rho))\leq \frac{1}{\min_{i\in V}\rho_i \cdot \lambda_{sec}(\hat{L})}\ .$$
\end{lemma}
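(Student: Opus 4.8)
The plan is to reduce the whole statement to the elementary pointwise bound
$$\min_{k\in V}\rho_k \;\le\; \theta_{ij}(\rho)=\frac{\rho_i+\rho_j}{2}\;\le\;\max_{k\in V}\rho_k,$$
valid on every edge $(i,j)\in E$ because $\rho\in\mathcal{P}_o(G)$ keeps all entries positive, and then to propagate it through the Courant--Fischer (Rayleigh quotient) description of the eigenvalues. The structural fact I will exploit is that the quadratic forms of $L(\rho)$ and $\hat L=D^TD$ are both built from the same vector $D\Phi$: one has $\Phi^T\hat L\Phi=(D\Phi)^T(D\Phi)$ and, as in the proof of Lemma \ref{lem2}, $\Phi^TL(\rho)\Phi=(D\Phi)^T\Theta(\rho)(D\Phi)$ with $\Theta(\rho)=\mathrm{diag}(\theta_{ij}(\rho))$.

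First I would turn the pointwise bound into a comparison of quadratic forms: since $\min_k\rho_k\le\theta_{ij}(\rho)\le\max_k\rho_k$ entrywise on the diagonal of $\Theta(\rho)$, for every $\Phi\in\mathbb{R}^n$
$$\min_{k\in V}\rho_k\cdot\Phi^T\hat L\Phi\;\le\;\Phi^TL(\rho)\Phi\;\le\;\max_{k\in V}\rho_k\cdot\Phi^T\hat L\Phi.$$
Next I would transfer this to individual eigenvalues. By Lemma \ref{lem2} and connectedness of $G$, both $L(\rho)$ and $\hat L$ have kernel exactly $\mathrm{span}\{(1,\dots,1)^T\}$, hence both are positive definite on the common complement $T_\rho\mathcal{P}_o(G)=\{\Phi:\sum_i\Phi_i=0\}$. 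Minimizing the Rayleigh quotient over $\Phi\perp(1,\dots,1)^T$ picks out the smallest nonzero eigenvalue, so the left inequality gives $\lambda_{sec}(L(\rho))\ge\min_k\rho_k\cdot\lambda_{sec}(\hat L)$; maximizing over all $\Phi$ and using the right inequality gives $\lambda_{\max}(L(\rho))\le\max_k\rho_k\cdot\lambda_{\max}(\hat L)$. Together with the trivial $\lambda_{sec}(L(\rho))\le\lambda_{\max}(L(\rho))$, this is exactly the first displayed chain.

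For the second chain I would use that the pseudo-inverse $L^{-1}(\rho)$ agrees with the genuine inverse of $L(\rho)$ on $T_\rho\mathcal{P}_o(G)$ and kills the kernel, so its nonzero eigenvalues are the reciprocals of the nonzero eigenvalues of $L(\rho)$. In particular $\lambda_{sec}(L^{-1}(\rho))=1/\lambda_{\max}(L(\rho))$ and $\lambda_{\max}(L^{-1}(\rho))=1/\lambda_{sec}(L(\rho))$. Taking reciprocals throughout the first chain reverses every inequality and yields the second chain verbatim.

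The argument is essentially routine; the only step that genuinely demands care is this last reciprocation. One must keep straight that the reciprocal map pairs the \emph{largest} eigenvalue of $L(\rho)$ with the \emph{smallest nonzero} eigenvalue of $L^{-1}(\rho)$ and vice versa, so that the roles of $\lambda_{sec}$ and $\lambda_{\max}$ are interchanged. This is also exactly where matching the kernels of $L(\rho)$ and $\hat L$ is indispensable: otherwise ``second smallest eigenvalue'' would not refer to the same subspace for the two operators. Everything else follows from the monotonicity of the Rayleigh quotient under the comparison of quadratic forms established in the first step.
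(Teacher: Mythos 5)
Your proof is correct and follows essentially the same route as the paper: bound $\theta_{ij}(\rho)$ between $\min_i\rho_i$ and $\max_i\rho_i$, compare the quadratic forms $\Phi^TL(\rho)\Phi$ and $\Phi^T\hat L\Phi$, apply the Rayleigh-quotient characterization on the complement of the common kernel $\mathrm{span}\{(1,\dots,1)^T\}$, and take reciprocals for the pseudo-inverse. You are merely more explicit than the paper about the reciprocation step swapping $\lambda_{sec}$ and $\lambda_{\max}$, which the paper dispatches with ``by the definition of $L^{-1}(\rho)$.''
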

\begin{proof}
Since
$$
\min_{i\in V}\rho_i\cdot \frac{1}{2}\sum_{(i,j)\in E}\omega_{ij}(\Phi_i-\Phi_j)^2 \leq \frac{1}{2}\sum_{(i,j)\in E}\omega_{ij}(\Phi_i-\Phi_j)^2\theta_{ij}(\rho)\leq
\max_{i\in V}\rho_i\cdot \frac{1}{2}\sum_{(i,j)\in E}\omega_{ij}(\Phi_i-\Phi_j)^2
\ ,$$
and the Laplacian matrix $\hat{L}$ has the simple eigenvalue $0$ with eigenvector $(1,\cdots, 1)$, then for any vector $\Phi\in\mathbb{R}^n$ with $\sum_{i=1}^n(\Phi_i-\frac{1}{n}\sum_{j=1}^n \Phi_j)^2=1$, we have
$$ \min_{i\in V}\rho_i\cdot \lambda_{sec}(\hat{L})\leq \Phi^TL(\rho)\Phi \leq \max_{i\in V}\rho_i\cdot\lambda_{\max}(\hat{L})\ .$$
This implies that 
$$\min_{i\in V}\rho_i\cdot\lambda_{sec}(\hat{L})\leq   \lambda_{sec}(L(\rho)) \leq  \lambda_{\max}(L(\rho))\leq \max_{i\in V}\rho_i\cdot \lambda_{\max}(\hat{L})\ .$$
By the definition of $L^{-1}(\rho)$, we can prove the other inequality.
\end{proof}

We are now ready to prove the main result.
\begin{proof}[Proof of Theorem \ref{th12}]
 Given a parameter $x>0$, we divide $B(\rho^0)$ into two parts: 
\begin{equation*}
\begin{split}
B(\rho^0)=&\{\rho\in B(\rho^0) ~:~\mathcal{F}(\rho)-\mathcal{F}(\rho^\infty)\geq x \} \cup \{\rho\in B(\rho^0) ~:~\mathcal{F}(\rho)-\mathcal{F}(\rho^\infty)\leq x \}\\
&\hspace{2cm} B_1 \hspace{7cm} B_2
\end{split}
\end{equation*}

We consider the convergence rate in $B_1$ first.
\begin{lemma}\label{LC1} 
Denote $r_1(x)=C_1x$, where $${C_1}=2m(\rho^0)\lambda_{sec}(\hat{L}) \lambda_{\min}(\textrm{Hess}\mathcal{F})\frac{1}{\mathcal{F}(\rho^0)-\mathcal{F}(\rho^\infty)}\ ,$$ 
then
\begin{equation}\label{C1}
\mathcal{F}(\rho(t))-\mathcal{F}(\rho^0)\leq e^{-r_1(x) t}(\mathcal{F}(\rho^0)-\mathcal{F}(\rho^\infty))\ , 
\end{equation}
for any $t\leq T=\inf\{\tau>0\colon \mathcal{F}(\rho)-\mathcal{F}(\rho^\infty)=x\}$.
\end{lemma}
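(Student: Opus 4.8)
The plan is to show that along the whole ``far'' region $B_1$ the energy gap decays exponentially at the advertised rate, via a linear dissipation inequality followed by Gronwall. (I read the left-hand side of \eqref{C1} as $\mathcal{F}(\rho(t))-\mathcal{F}(\rho^\infty)$, which is the monotone quantity the exponential factor measures.) First I would note that the trajectory stays in $B_1$ up to time $T$: since $\frac{d}{dt}\mathcal{F}(\rho(t))=-F(\rho)^TL(\rho)F(\rho)\leq 0$, the gap $\mathcal{F}(\rho(t))-\mathcal{F}(\rho^\infty)$ is nonincreasing, so for $t\leq T$ it stays between $x$ and $\mathcal{F}(\rho^0)-\mathcal{F}(\rho^\infty)$; in particular $\mathcal{F}(\rho(t))-\mathcal{F}(\rho^\infty)\geq x$ on $[0,T]$.

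The core step is to bound the dissipation $-\frac{d}{dt}\mathcal{F}=F(\rho)^TL(\rho)F(\rho)$ below by the gap. I would decompose $F(\rho)=PF(\rho)+c\mathbf{1}$, where $P=I-\frac1n\mathbf{1}\mathbf{1}^T$ is the orthogonal projection onto the tangent space $T_\rho\mathcal{P}_o(G)=\mathbf{1}^\perp$. Because $L(\rho)\mathbf{1}=0$, only the tangential part survives; since $PF(\rho)\in T_\rho\mathcal{P}_o(G)=\ker(L(\rho))^\perp$, Lemma \ref{lem7} together with the lower bound $\rho_i(t)\geq m(\rho^0)$ proved above yield
$$F^TL(\rho)F=(PF)^TL(\rho)(PF)\geq \lambda_{sec}(L(\rho))\|PF\|^2\geq m(\rho^0)\lambda_{sec}(\hat{L})\|PF\|^2\ .$$
To turn $\|PF\|^2$ into the energy gap I would invoke strong convexity: the restriction of $\mathcal{F}$ to the affine slice $\{\sum_i\rho_i=1\}$ is $\lambda_{\min}(\textrm{Hess}\mathcal{F})$-strongly convex, with interior minimizer $\rho^\infty$ at which $PF(\rho^\infty)=0$, so the gradient-domination (Polyak--Lojasiewicz) inequality gives $\|PF\|^2\geq 2\lambda_{\min}(\textrm{Hess}\mathcal{F})\,(\mathcal{F}(\rho)-\mathcal{F}(\rho^\infty))$. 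Combining the two estimates,
$$-\frac{d}{dt}\mathcal{F}(\rho(t))\geq 2m(\rho^0)\lambda_{sec}(\hat{L})\lambda_{\min}(\textrm{Hess}\mathcal{F})\,(\mathcal{F}(\rho(t))-\mathcal{F}(\rho^\infty))\ .$$

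Finally I would downgrade this to the stated $x$-dependent rate and close by Gronwall. On $B_1$ we have $\mathcal{F}(\rho(t))-\mathcal{F}(\rho^\infty)\geq x$ and $\leq \mathcal{F}(\rho^0)-\mathcal{F}(\rho^\infty)$, so the right-hand side is at least $2m(\rho^0)\lambda_{sec}(\hat{L})\lambda_{\min}(\textrm{Hess}\mathcal{F})\,x$, which by the definition of $C_1$ equals $r_1(x)(\mathcal{F}(\rho^0)-\mathcal{F}(\rho^\infty))\geq r_1(x)(\mathcal{F}(\rho(t))-\mathcal{F}(\rho^\infty))$. Hence $\frac{d}{dt}(\mathcal{F}(\rho(t))-\mathcal{F}(\rho^\infty))\leq -r_1(x)(\mathcal{F}(\rho(t))-\mathcal{F}(\rho^\infty))$ on $[0,T]$, and Gronwall's inequality gives \eqref{C1}. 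I expect the main obstacle to be the strong-convexity/gradient-domination step: one must verify carefully that the effective strong-convexity constant on the tangent slice is still controlled by $\lambda_{\min}(\textrm{Hess}\mathcal{F})$ (through eigenvalue interlacing for the compression $P\,\textrm{Hess}\mathcal{F}\,P$) and that $\rho^\infty$ is genuinely the interior minimizer so that $PF$ vanishes there; the dissipation identity and the Gronwall closure are then routine.
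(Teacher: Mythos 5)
Your proposal is correct and follows essentially the same route as the paper: the same dissipation identity plus Gronwall, the same projection $\mathbb{P}F$ with the bound $F^TL(\rho)F\geq m(\rho^0)\lambda_{sec}(\hat{L})\|\mathbb{P}F\|_2^2$, and your Polyak--Lojasiewicz step is exactly the paper's hand-made version of it (Taylor expansion at $\rho^\infty$ followed by the AM--GM estimate $\frac{x}{s}+\frac{1}{2}\lambda_{\min}s\geq\sqrt{2\lambda_{\min}x}$). You are also right that the left-hand side of \eqref{C1} should read $\mathcal{F}(\rho(t))-\mathcal{F}(\rho^\infty)$; that is a typo in the statement.
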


\begin{proof}
We shall show 
$$\frac{\min_{\rho\in B_1}\{ F(\rho)^TL(\rho) F(\rho)\}}{\mathcal{F}(\rho^0)-\mathcal{F}(\rho^\infty)}\geq C_1 x\ .$$
If this is true, then for $t\leq T$,
\begin{equation*}
\begin{split}
\frac{d}{dt}\mathcal{F}(\rho(t))=&-F(\rho)^TL(\rho)F(\rho)\\
\leq &-\min_{\rho\in B_1} F(\rho)^TL(\rho)F(\rho) \frac{\mathcal{F}(\rho(t))-\mathcal{F}(\rho^\infty)}{\mathcal{F}(\rho(t))-\mathcal{F}(\rho^\infty)}          \\
\leq &-\frac{\min_{\rho\in B_1} F(\rho)^TL(\rho)F(\rho)}{\mathcal{F}(\rho^0)-\mathcal{F}(\rho^\infty)}  [\mathcal{F}(\rho(t))-\mathcal{F}(\rho^\infty)]       \\
\leq &-C_1 x   [\mathcal{F}(\rho(t))-\mathcal{F}(\rho^\infty)]     \ .
\end{split}
\end{equation*}
From the Gronwall's inequality, \eqref{C1} is proven. 

By Taylor expansion on $\rho$, we have 
$$\mathcal{F}(\rho^\infty)=\mathcal{F}(\rho)+F(\rho)\cdot (\rho^\infty-\rho)+\frac{1}{2}(\rho^\infty-\rho)^T\textrm{Hess}\mathcal{F}(\bar \rho)(\rho^\infty-\rho)\ ,$$
where $\bar \rho=\rho+s(\rho^\infty-\rho)$, for some constant $s\in (0,1)$. 
Denote the Euclidean projection matrix onto $\mathcal{T}_\rho\mathcal{P}_o(G)$ by 
$$\mathbb{P}=\mathbb{I}-\frac{1}{n}\mathbf{1}\mathbf{1}^T\ ,$$
where $\mathbf{1}=[1,\cdots, 1]^T$ and $\mathbb{I}\in \mathbb{R}^{n\times n}$ is the identity matrix.
Since $\mathbb{P}F(\rho)\cdot(\rho-\rho^\infty)=F(\rho)\cdot (\rho-\rho^\infty)$, then \begin{equation*}
\begin{split}
x\leq \mathcal{F}(\rho)-\mathcal{F}(\rho^\infty)=&\mathbb{P}F(\rho)\cdot (\rho-\rho^\infty)-\frac{1}{2}(\rho-\rho^\infty)^T\textrm{Hess}\mathcal{F}(\bar\rho)(\rho-\rho^\infty)\\
\leq & \|\mathbb{P}F(\rho)\|_2 \|\rho-\rho^\infty\|_2-\frac{1}{2}\lambda_{\min}(\textrm{Hess}\mathcal{F})\|\rho-\rho^\infty\|_2^2\ .\\
\end{split}
\end{equation*}
The above implies 
\begin{equation*}
\begin{split}
\|\mathbb{P}F(\rho)\|_2\geq &\frac{x}{\|\rho-\rho^\infty\|_2}+\frac{1}{2}\lambda_{\min}(\textrm{Hess}\mathcal{F})\|\rho-\rho^\infty\|_2\\
\geq &\sqrt{2x\lambda_{\min}(\textrm{Hess}\mathcal{F})}\ .
\end{split}
\end{equation*}
Thus
\begin{equation*}
\begin{split}
F(\rho)^TL(\rho)F(\rho)=&  \frac{1}{2}\sum_{(i,j)\in E} (F_i(\rho)-F_j(\rho))^2\theta_{ij}(\rho) \\
\geq & \frac{1}{2}\sum_{(i,j)\in E} (F_i(\rho)-F_j(\rho))^2m(\rho^0)  \\
=& \frac{1}{2}\sum_{(i,j)\in E} [\big(F_i(\rho)-\frac{1}{n}\sum_{k=1}^nF_k(\rho)\big)-\big(F_j(\rho)-\frac{1}{n}\sum_{k=1}^n F_k(\rho)\big)]^2m(\rho^0)  \\
= &  m(\rho^0) (\mathbb{P}F(\rho))^T\hat{L}(\mathbb{P}F(\rho))\\
\geq & m(\rho^0)  \lambda_{sec}(\hat{L})\|\mathbb{P}F(\rho)\|_2^2\\
\geq & 2 m(\rho^0)  \lambda_{\min}(\textrm{Hess}\mathcal{F}) \lambda_{sec}(\hat{L}) x\ ,
\end{split}
\end{equation*}
which finishes the proof.
\end{proof}

Next we give the convergence rate in  $B_2$.
\begin{lemma}\label{LC2}
Denote $r_2(x)=C_2-C_3\sqrt{x}$, where
$$C_2=2m(\rho^0)\lambda_{sec}(\hat{L}) \lambda_{\min}(\textrm{Hess}\mathcal{F})\ ,$$
and $$C_3=2\sqrt{2}\textrm{Deg}(G)\max_{(i,j)\in E}\omega_{ij}\frac{\|\textrm{Hess}\mathcal{F}\|_1}{\sqrt{\lambda_{\min}(\textrm{Hess}\mathcal{F})}}\frac{1-m(\rho^0)}{m(\rho^0)} \frac{\lambda_{\max}(\hat{L})}{\lambda_{sec}(\hat{L})}\ .$$
Then 
\begin{equation}\label{C2}
\mathcal{F}(\rho(t))-\mathcal{F}(\rho^0)\leq e^{-r_2(x)(t-T)}(\mathcal{F}(\rho(T))-\mathcal{F}(\rho^\infty))\ , 
\end{equation}
for any $t\geq T=\inf\{\tau>0\colon \mathcal{F}(\rho(\tau))-\mathcal{F}(\rho^\infty)=x\}$.
\end{lemma}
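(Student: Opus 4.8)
The plan is to verify the differential inequality \eqref{C} with the constant $C$ replaced by $r_2(x)$, uniformly along the trajectory once it has entered $B_2$, and then to read off \eqref{C2} from the Gronwall argument already described before the lemma on well-posedness. Since $\frac{d}{dt}\mathcal{F}(\rho(t))\le 0$, the flow cannot leave $B_2=\{\rho\in B(\rho^0):\mathcal{F}(\rho)-\mathcal{F}(\rho^\infty)\le x\}$ after time $T$; hence it suffices to bound the quotient in \eqref{EC} from below by $r_2(x)=C_2-C_3\sqrt{x}$ at every $\rho\in B_2$. Following the splitting in \eqref{EC}, I would estimate the quadratic term from below by $C_2$ and the cubic term from above by $C_3\sqrt{x}$ separately.

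For the quadratic term $2\dot\rho^T\textrm{Hess}\mathcal{F}(\rho)\dot\rho/(\dot\rho^TL^{-1}(\rho)\dot\rho)$, I would bound the numerator below by $2\lambda_{\min}(\textrm{Hess}\mathcal{F})\|\dot\rho\|_2^2$ and the denominator above by $\lambda_{\max}(L^{-1}(\rho))\|\dot\rho\|_2^2$. Since $\dot\rho\in T_\rho\mathcal{P}_o(G)$, Lemma \ref{lem7} together with the lower bound $\min_i\rho_i\ge m(\rho^0)$ gives $\lambda_{\max}(L^{-1}(\rho))\le 1/(m(\rho^0)\lambda_{sec}(\hat L))$, and the quotient is therefore at least $2m(\rho^0)\lambda_{sec}(\hat L)\lambda_{\min}(\textrm{Hess}\mathcal{F})=C_2$.

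The cubic term is where the real work lies, and I expect it to be the main obstacle. Using $\dot\rho=-L(\rho)F(\rho)$ and the pseudo-inverse identity $L^{-1}(\rho)\dot\rho=-\mathbb{P}F(\rho)$, together with the fact that $L(\dot\rho)$ annihilates constants, the cubic numerator collapses to $F(\rho)^TL(\dot\rho)F(\rho)=\tfrac12\sum_{(i,j)\in E}\omega_{ij}(F_i-F_j)^2\theta_{ij}(\dot\rho)$, which is of one order higher in $\dot\rho$ than the denominator $F(\rho)^TL(\rho)F(\rho)$. The idea is to extract a factor $\|\dot\rho\|_\infty$: bounding $|\theta_{ij}(\dot\rho)|\le\|\dot\rho\|_\infty$ on top and $\theta_{ij}(\rho)\ge m(\rho^0)$ on the bottom, and passing through $\hat L=D^TD$ with the eigenvalue estimates of Lemma \ref{lem7}, gives a bound of the form $\|\dot\rho\|_\infty\,\lambda_{\max}(\hat L)/(m(\rho^0)\lambda_{sec}(\hat L))$ for the cubic quotient. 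It then remains to show $\|\dot\rho\|_\infty=O(\sqrt{x})$ throughout $B_2$. I would do this in three steps: first, reading off from \eqref{FPE} the bound $|\dot\rho_i|\le 2\,\textrm{Deg}(G)\max_{(i,j)\in E}\omega_{ij}\,(1-m(\rho^0))\,\|\mathbb{P}F(\rho)\|_2$, using $F_i-F_j=(\mathbb{P}F)_i-(\mathbb{P}F)_j$ and $\theta_{ij}(\rho)\le\max_k\rho_k\le 1-m(\rho^0)$; second, since $\mathbb{P}F(\rho^\infty)=0$, a mean-value estimate gives $\|\mathbb{P}F(\rho)\|_2\le\|\textrm{Hess}\mathcal{F}\|_1\,\|\rho-\rho^\infty\|_2$; third, strict convexity yields $\tfrac12\lambda_{\min}(\textrm{Hess}\mathcal{F})\|\rho-\rho^\infty\|_2^2\le \mathcal{F}(\rho)-\mathcal{F}(\rho^\infty)\le x$, so $\|\rho-\rho^\infty\|_2\le\sqrt{2x/\lambda_{\min}(\textrm{Hess}\mathcal{F})}$. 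Chaining these three estimates produces precisely the bound $C_3\sqrt{x}$.

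Combining the two pieces gives $C\ge C_2-C_3\sqrt{x}=r_2(x)$ throughout $B_2$, so \eqref{C} holds with $C=r_2(x)$ for all $t\ge T$. Integrating the resulting inequality $\frac{d^2}{dt^2}\mathcal{F}(\rho(t))\ge -r_2(x)\frac{d}{dt}\mathcal{F}(\rho(t))$ from $t$ to $\infty$, and using $\mathcal{F}(\rho(t))\to\mathcal{F}(\rho^\infty)$ and $\frac{d}{dt}\mathcal{F}(\rho(t))\to 0$ from the convergence established in the preceding lemma, yields $\frac{d}{dt}[\mathcal{F}(\rho(t))-\mathcal{F}(\rho^\infty)]\le -r_2(x)[\mathcal{F}(\rho(t))-\mathcal{F}(\rho^\infty)]$, and Gronwall's inequality delivers \eqref{C2}.
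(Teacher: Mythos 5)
Your proposal is correct and follows essentially the same route as the paper: split off the quadratic term and bound it below by $C_2$ via Lemma \ref{lem7}, bound the cubic term by $\|\dot\rho\|_{\infty}\lambda_{\max}(\hat L)/(m(\rho^0)\lambda_{sec}(\hat L))$, establish $\|\dot\rho\|_{\infty}=O(\sqrt{x})$ on $B_2$ through the same two Taylor expansions (one for $\|\rho-\rho^\infty\|_2$, one for $F_i-F_j$), and close with integration to infinity plus Gronwall. The only cosmetic difference is that you run the Rayleigh-quotient estimates directly on $F$ and $\dot\rho$ where the paper normalizes via $\sigma=L^{-1/2}(\rho)\dot\rho$; the constants come out identically.
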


\begin{proof}
We shall show $$\min_{\rho\in B_2}\{ \frac{2\dot\rho^T\textrm{Hess}\mathcal{F}(\rho)\dot\rho-\dot\rho^TL^{-1}(\rho)L(\dot\rho)L^{-1}(\rho) \dot\rho}{\dot\rho^TL^{-1}(\rho)\dot\rho}\}\geq r_2(x)\ .$$
Suppose it is true, then  \begin{equation*} 
\frac{d^2}{dt^2}\mathcal{F}(\rho(t))\geq -r_2(x)\frac{d}{dt}\mathcal{F}(\rho(t))\ 
\end{equation*}
holds for all $t\geq T$. Integrating this formula in $[t,+\infty)$, we obtain
$$\frac{d}{dt}[\mathcal{F}(\rho^\infty)-\mathcal{F}(\rho(t))]\geq -r_2(x)[\mathcal{F}(\rho^\infty)-\mathcal{F}(\rho(t))]\ .$$
By Gronwall's inequality, \eqref{C2} is proven.

We come back to estimate $r_2(x)$. Since $F(\rho^\infty)=c[1,\cdots, 1]^T$ is a constant vector, by Taylor expansion,
we have
\begin{equation*}
\begin{split}
x\geq &\mathcal{F}(\rho)-\mathcal{F}(\rho^\infty) \\
=&F(\rho^\infty)\cdot(\rho-\rho^\infty)+\frac{1}{2}(\rho-\rho^\infty)^T \textrm{Hess}\mathcal{F}(\bar \rho)(\rho-\rho^\infty)\\
 =&\frac{1}{2}(\rho-\rho^\infty)^T \textrm{Hess}\mathcal{F}(\bar \rho)(\rho-\rho^\infty)\\
 \geq &\frac{1}{2} \lambda_{\min}(\textrm{Hess}\mathcal{F}) \|\rho-\rho^\infty\|_2^2\ .
\end{split}
\end{equation*}
Thus 
$$\|\rho-\rho^\infty\|_2\leq \sqrt{\frac{2}{\lambda_{\min}(\textrm{Hess}\mathcal{F})}}\sqrt{x}.$$ Since
\begin{equation*}
\begin{split}
\dot\rho_i&= (L(\rho)F(\rho))_i \\
\leq &\sum_{j\in N(i)}\omega_{ij}|F_i(\rho)-F_j(\rho)|\theta_{ij}(\rho) \\
\leq & \textrm{Deg}(G) \max_{(i,j)\in E}\omega_{ij}  \max_{(i,j)\in E}|F_i(\rho)-F_j(\rho)| \max_{i}\rho_i \\
\leq & \textrm{Deg}(G) \max_{(i,j)\in E}\omega_{ij}  \max_{(i,j)\in E}|F_i(\rho)-F_j(\rho)|  (1-m(\rho^0)) \\
\end{split}
\end{equation*}
and 
\begin{equation*}
\begin{split}
F_i(\rho)-F_j(\rho)=&F_i(\rho^\infty)+\nabla_\rho F_i(\bar \rho) \cdot (\rho-\rho^\infty)-F_j(\rho^\infty)-\nabla_\rho F_j(\tilde \rho)\cdot (\rho-\rho^\infty)\\
=&(\nabla_\rho F_i(\bar\rho)-\nabla_\rho F_j(\tilde \rho))\cdot (\rho-\rho^\infty)\\
\leq&\|\nabla_\rho F_i(\bar\rho)-\nabla_\rho F_j(\tilde \rho)\|_2 \|\rho-\rho^\infty\|_2\\
\leq & 2 \sup_{i\in V, \rho\in \mathcal{P}(G)}\|\nabla_\rho F_i(\rho)\|_2 \|\rho-\rho^\infty\|_2\\
\leq & 2 \sup_{i\in V, \rho\in \mathcal{P}(G)}\|\nabla_\rho F_i(\rho)\|_1 \|\rho-\rho^\infty\|_2\\
= &2 \|\textrm{Hess}\mathcal{F}\|_1 \|\rho-\rho^\infty\|_2\ ,
\end{split}
\end{equation*}
where $\tilde{\rho}$, $\bar \rho$ are two discrete densities between the line segment of $\rho$ and $\rho^\infty$.

Combining these two estimates, we get
$$\|\dot\rho\|_{\infty}\leq  2\cdot\textrm{Deg}(G) \max_{(i,j)\in E}\omega_{ij}  (1-m(\rho^0)) \|\textrm{Hess}\mathcal{F}\|_1 \sqrt{\frac{2x}{\lambda_{\min}(\textrm{Hess}\mathcal{F})}}\ .$$

Denote 
$$L^{-\frac{1}{2}}(\rho)=T\begin{pmatrix}
0 & & &\\
& (\frac{1}{\lambda_{sec}L(\rho)})^{\frac{1}{2}}& &\\
& & \ddots & \\
& & & (\frac{1}{\lambda_{max}L(\rho)})^{\frac{1}{2}}
\end{pmatrix}T^{-1} \ , $$
and $\sigma=\frac{1}{\|L^{-\frac{1}{2}}(\rho)\dot\rho\|_2}L^{-\frac{1}{2}}(\rho)\dot\rho$, thus
\begin{equation*}
\begin{split}
& \frac{2\dot\rho^T\textrm{Hess}\mathcal{F}(\rho)\dot\rho}{\dot\rho^TL^{-1}(\rho)\dot\rho}-\frac{\dot\rho^TL^{-1}(\rho)L(\dot\rho)L^{-1}(\rho) \dot\rho}{\dot\rho^TL^{-1}(\rho)\dot\rho}\\
\geq & 2\frac{\dot\rho^T\textrm{Hess}\mathcal{F}(\rho)\dot\rho}{\dot\rho^TL^{-1}(\rho)\dot\rho}-\|\dot\rho\|_\infty\frac{\dot\rho^TL^{-1}(\rho)\cdot \hat{L}\cdot L^{-1}(\rho) \dot\rho}{\dot\rho^TL^{-1}(\rho)\dot\rho}\\
= &  2\sigma^TL^{\frac{1}{2}}(\rho) \textrm{Hess}\mathcal{F}(\rho)L^{\frac{1}{2}}(\rho)\sigma-a(x)\sigma^TL^{-\frac{1}{2}}(\rho)\hat{L} L^{-\frac{1}{2}}(\rho) \sigma\\
\geq & 2\lambda_{\min}(\textrm{Hess}\mathcal{F}) \sigma^TL(\rho)\sigma -a(x) \lambda_{\max}(\hat{L})\sigma^TL^{-1}(\rho)\sigma\\
\geq & 2\lambda_{\min}(\textrm{Hess}\mathcal{F}) \lambda_{sec}(L(\rho)) -a(x) \lambda_{\max}(\hat{L}) \lambda_{\max}(L^{-1}(\rho))\\
\geq & 2\lambda_{\min}(\textrm{Hess}\mathcal{F}) m(\rho^0)\lambda_{\sec}(\hat{L}) -\frac{a(x) \lambda_{\max}(\hat{L})}{m(\rho^0)\lambda_{sec}(\hat{L})}\\
=& C_2-C_3\sqrt{x}\ ,
\end{split}
\end{equation*}
where the last inequality comes from $\lambda_{sec}(L(\rho))\geq m(\rho^0)\lambda_{sec}(\hat{L})$ and $\lambda_{\max}(L^{-1}(\rho))\leq \frac{1}{m(\rho^0)\lambda_{sec}(\hat{L})}$ in Lemma \ref{lem7}. 
\end{proof}

We are ready to find the overall convergence rate. By Lemma \ref{LC1} and Lemma \ref{LC2}, one can show that for any $t\geq 0$, 
$$\mathcal{F}(\rho(t))-\mathcal{F}(\rho^\infty)\leq e^{-\min\{r_1(x), r_2(x)\}t}(\mathcal{F}(\rho^0)-\mathcal{F}(\rho^\infty))\ ,$$
for any $x>0$. We estimate a constant rate $C$ by showing
 $$\max_{x>0}\min\{C_1 x, C_2-C_3\sqrt{x}\}\geq C\ .$$
It is clear that the maximizer $x^*>0$ is achieved at $C_1 x^*=C_2-C_3\sqrt{x^*}$, i.e. $\sqrt{x^*}= \frac{-C_3+\sqrt{C_3^2+4C_1C_2}}{2C_1}$. Thus 
\begin{equation*}
\begin{split}
C_1x^*= &\frac{(-C_3+\sqrt{C_3^2+4C_1C_2})^2}{4C_1}= \frac{(C_3^2+4C_1C_2-C_3^2)^2}{4C_1(C_3+\sqrt{C_3^2+4C_1C_2})^2}\\
\geq & \frac{16 C_1^2C_2^2}{4C_1\cdot 4 (C_3+\sqrt{C_1C_2})^2}= C_2\frac{1}{ (\frac{C_3}{\sqrt{C_1C_2}}+1)^2}\ ,\\
\end{split}
\end{equation*}
which finishes the proof.
\end{proof}
We remark that for general choice $\theta_{ij}(\rho)\in C^1$, the explicit rate can also be established. Following the proof in Lemma \ref{LC2}, one only needs to replace $r$ by
$\bar r= r\max_{\rho\in B(\rho^0), (i,j)\in E} \frac{\partial \theta_{ij}}{\partial \rho_i}$, then the generalized convergence rate is 
$$C=2m(\rho^0)\lambda_{sec}(\hat{L})\lambda_{\min}(\textrm{Hess}\mathcal{F}) \frac{1}{(\bar r+1)^2}\ .$$
 In addition, when $\mathbb{W}=0$, Theorem \ref{th12} gives the exponential convergence of linear FPE on graphs, for any potential $\mathbb{V}\in \mathbb{R}^n$.

\subsection{Inequalities}
In literature, it is well known that the convergence of FPE can be used to prove the so called Log-Sobolev inequality
and a few others. We mimic this result on graphs and further extend the inequality to the case that includes the nonlinear interaction energy. For simplicity, we take $\beta=1$ and consider $\mathcal{F}(\rho)=\frac{1}{2}\rho^T \mathbb{W}\rho+ \mathbb{V}^T\rho+\sum_{i=1}^n\rho_i\log\rho_i$, which is strictly convex in $\mathcal{P}(G)$. Again, we denote $\rho^\infty$ as the Gibbs measure.

The Log-Sobolev inequality describes a relationship between two functionals named relative entropy and relative Fisher information, which can be expressed using our notations in the following formulas,
\begin{equation}\label{H}
\mathcal{H}(\rho|\rho^\infty):=\mathcal{F}(\rho)-\mathcal{F}(\rho^\infty) \qquad \textrm{Relative entropy} \ ;
\end{equation}
and
\begin{equation}\label{I}
\begin{split}
\mathcal{I}(\rho|\rho^\infty):=&F(\rho)^TL(\rho)F(\rho) \qquad \textrm{Relative Fisher information}\\
=&\frac{1}{2}\sum_{(i,j)\in E}\omega_{ij}(\log\frac{\rho_i}{e^{-(\mathbb{W}\rho)_i-\mathbb{V}_i}}-\log\frac{\rho_j}{e^{-(\mathbb{W}\rho)_j-\mathbb{V}_j}})^2 \theta_{ij}(\rho) \ .
\end{split}
\end{equation}
\begin{corollary}\label{inequality}
If $\mathcal{F}(\rho)$ is strictly convex in $\mathcal{P}(G)$, then there exists a constant $\lambda>0$, such that 
\begin{equation*}
\mathcal{H}(\rho|\rho^\infty)\leq \frac{1}{2\lambda} \mathcal{I}(\rho|\rho^\infty)\ .
\end{equation*}
\end{corollary}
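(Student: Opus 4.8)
The plan is to recast the claimed inequality as a statement about a single Rayleigh-type quotient and to show that quotient is bounded below by a positive constant. By the strict convexity of $\mathcal{F}$ and the connectedness of $G$, both $\mathcal{H}(\rho|\rho^\infty)$ and $\mathcal{I}(\rho|\rho^\infty)$ are nonnegative and vanish \emph{exactly} at the unique minimizer $\rho^\infty$ (recall $\mathcal{I}=F^TL(\rho)F=0$ forces $F(\rho)$ constant on the connected graph, i.e. $\rho=\rho^\infty$, cf. Lemma \ref{lem2}). Hence the asserted bound $\mathcal{H}\le\frac{1}{2\lambda}\mathcal{I}$ for all $\rho\in\mathcal{P}_o(G)$ is equivalent to
\[
\lambda:=\frac{1}{2}\inf_{\rho\in\mathcal{P}_o(G),\ \rho\neq\rho^\infty}\frac{\mathcal{I}(\rho|\rho^\infty)}{\mathcal{H}(\rho|\rho^\infty)}>0 .
\]
The connection with entropy dissipation motivates this: along \eqref{FPE} one has $\frac{d}{dt}\mathcal{H}(\rho(t)|\rho^\infty)=-\mathcal{I}(\rho(t)|\rho^\infty)$, so differentiating the decay $\mathcal{H}(\rho(t))\le e^{-Ct}\mathcal{H}(\rho(0))$ of Theorem \ref{th12} at $t=0$ already yields $\mathcal{I}(\rho)\ge C\,\mathcal{H}(\rho)$ for each fixed $\rho$. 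The remaining work is to make the constant independent of $\rho$, which I would do by a compactness argument controlling the two regimes where numerator and denominator degenerate simultaneously.

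Near the equilibrium I would Taylor-expand both functionals about $\rho^\infty$. Writing $\delta=\rho-\rho^\infty\in T_{\rho^\infty}\mathcal{P}_o(G)$ and $H=\textrm{Hess}\mathcal{F}(\rho^\infty)$, and using that $F(\rho^\infty)$ is a constant multiple of $\mathbf{1}$ (hence annihilated both by pairing with $\delta$, since $\mathbf{1}^T\delta=0$, and by $L(\rho)$, since $L(\rho)\mathbf{1}=0$), one finds $\mathcal{H}=\frac{1}{2}\delta^T H\delta+o(|\delta|^2)$ and $\mathcal{I}=\delta^T H\,L(\rho^\infty)\,H\delta+o(|\delta|^2)$, the first-order variation of $L(\rho)$ and the cross terms dropping out. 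The leading ratio $\delta^T H L(\rho^\infty) H\delta/(\tfrac12\delta^T H\delta)$ is a quotient of quadratic forms on $T_{\rho^\infty}\mathcal{P}_o(G)$ whose numerator vanishes only if $H\delta\in\ker L(\rho^\infty)=\mathrm{span}(\mathbf{1})$, which for $\delta\in T_{\rho^\infty}\mathcal{P}_o(G)\setminus\{0\}$ forces $\delta=0$ because $H$ is positive definite. Thus the quotient is strictly positive on the compact $H$-unit sphere of the tangent space and is bounded below there, so $\liminf_{\rho\to\rho^\infty}\mathcal{I}/\mathcal{H}>0$.

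Near the boundary $\partial\mathcal{P}(G)$ the behaviour is opposite. The relative entropy stays bounded because $\mathcal{F}$ extends continuously to the compact simplex (using $t\log t\to0$), while $\mathcal{I}$ blows up: if $\rho^{(k)}\to\bar\rho$ with $\bar\rho_i=0$ for some $i$, connectedness supplies an edge $(i,j)$ with $\bar\rho_j>0$, across which $\theta_{ij}(\rho^{(k)})\to\bar\rho_j/2>0$ while $\log\bigl(\rho_i^{(k)}/e^{-(\mathbb{W}\rho^{(k)})_i-\mathbb{V}_i}\bigr)\to-\infty$, so that single nonnegative summand in \eqref{I} forces $\mathcal{I}\to+\infty$ and hence $\mathcal{I}/\mathcal{H}\to+\infty$. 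Finally, on the complementary region, which is bounded away from both $\rho^\infty$ and $\partial\mathcal{P}(G)$ and is therefore compact, $\mathcal{I}/\mathcal{H}$ is continuous and strictly positive, hence attains a positive minimum. Taking the smallest of the three lower bounds produces the uniform $\lambda>0$.

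I expect the near-equilibrium estimate to be the main obstacle: there both $\mathcal{H}$ and $\mathcal{I}$ vanish to second order, so one must correctly isolate the leading quadratic forms — in particular verify that the terms involving the constant vector $F(\rho^\infty)$ and the first-order variation of $L(\rho)$ genuinely drop out — and then confirm positivity of the resulting Rayleigh quotient on $T_{\rho^\infty}\mathcal{P}_o(G)$, using the positive definiteness of $\textrm{Hess}\mathcal{F}$ together with $\ker L(\rho^\infty)=\mathrm{span}(\mathbf{1})$ and $\lambda_{sec}(L(\rho^\infty))>0$ from Lemma \ref{lem7}. The boundary blow-up, by contrast, is robust and needs only connectedness and the continuity of $\mathcal{F}$ on the closed simplex.
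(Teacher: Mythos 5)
Your proof is correct, and while its overall architecture (split the simplex into a near-equilibrium region, a boundary region, and a compact remainder, then take the worst of the three lower bounds on $\mathcal{I}/\mathcal{H}$) parallels the paper's, the treatment of the crucial near-equilibrium regime is genuinely different. The paper handles the region $\{\mathcal{H}(\rho|\rho^\infty)\le x\}$ \emph{dynamically}: it runs the flow \eqref{FPE} from $\rho$, invokes Lemma \ref{LC2} to get $\frac{d^2}{dt^2}\mathcal{H}\ge -r_2(x)\frac{d}{dt}\mathcal{H}$ along the whole trajectory, and integrates over $[0,\infty)$ to conclude $\mathcal{I}(\rho|\rho^\infty)\ge r_2(x)\mathcal{H}(\rho|\rho^\infty)$; this recycles the machinery already built for Theorem \ref{th12} (invariance of $B(\rho^0)$, the eigenvalue bounds of Lemma \ref{lem7}) and produces the explicit constant $r_2(x)=C_2-C_3\sqrt{x}$. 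You instead argue \emph{statically}, Taylor-expanding both functionals at $\rho^\infty$ and reducing to positivity of the Rayleigh quotient $(H\delta)^TL(\rho^\infty)(H\delta)\big/\tfrac12\delta^TH\delta$ on $T_{\rho^\infty}\mathcal{P}_o(G)$; your verification that the cross terms vanish (via $L(\rho)\mathbf{1}=0$ and $\mathbf{1}^T\delta=0$) and that the numerator is nondegenerate on the tangent space is sound, and this route avoids any appeal to well-posedness or invariant sets of the flow, at the price of only an asymptotic (liminf) constant near $\rho^\infty$. For the complementary region the two arguments are essentially the same idea packaged differently: the paper treats $\{\mathcal{H}\ge x\}$ in one stroke using compactness, lower semicontinuity of $\mathcal{I}$, and the convention $\mathcal{I}=+\infty$ on $\partial\mathcal{P}(G)$, which is exactly your boundary blow-up plus interior compactness argument combined. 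Both proofs rest on the same two facts at bottom --- $\mathcal{I}(\rho|\rho^\infty)=0$ with $\rho\in\mathcal{P}_o(G)$ forces $\rho=\rho^\infty$ by connectedness, and $\mathcal{I}$ diverges at the boundary while $\mathcal{H}$ stays bounded --- so your version is a legitimate, slightly more self-contained alternative.
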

We want to point out that when $\mathbb{W}=0$, corollary \eqref{inequality} is reduced to the standard Log-Sobolev inequality. In this case, functionals \eqref{H} and \eqref{I} can be written as $$\mathcal{H}(\rho)=\sum_{i=1}^n\rho_i\log\frac{\rho_i}{\rho^\infty_i}\ ,\quad \mathcal{I}(\rho)=\frac{1}{2}\sum_{(i,j)\in E}\omega_{ij}(\log\frac{\rho_i}{\rho_i^\infty}-\log\frac{\rho_j}{\rho_j^\infty})^2 \theta_{ij}(\rho)\ .$$
Their continuous counterparts are
$$ \mathcal{H}(\rho)=\int_{\mathbb{R}^d}\rho(x)\log\frac{\rho(x)}{\rho^\infty(x)}dx\ ,\quad \mathcal{I}(\rho)=\int_{\mathbb{R}^d}(\nabla\log\frac{\rho(x)}{\rho^\infty(x)})^2\rho(x)dx\ .$$
\begin{proof}
We use the fact that the dissipation of relative entropy is the relative Fisher information along FPE \eqref{FPE},
\begin{equation*}
\mathcal{I}(\rho(t))=F(\rho)^TL(\rho)F(\rho)=-\frac{d}{dt}\mathcal{H}(\rho(t)|\rho^\infty)\ .
\end{equation*}
Similar as in Theorem \ref{th12}, we divide $\mathcal{P}(G)$ into two regions based on a given parameter $x>0$:
\begin{equation*}
\begin{split}
\mathcal{P}(G)=&\{\rho\in\mathcal{P}(G) ~:~\mathcal{H}(\rho|\rho^\infty)\leq x \} \cup \{\rho\in \mathcal{P}(G)~:~\mathcal{H}(\rho|\rho^\infty)\geq x \}\\
&\hspace{2cm} D_1 \hspace{4.2cm} D_2
\end{split}
\end{equation*}
We shall show two upper bounds of $\frac{\mathcal{H}(\rho)}{\mathcal{I}(\rho)}$ in $D_1$ and $D_2$ respectively.

On one hand, consider FPE \eqref{FPE}, with $\rho(t)$ starting from an initial measure $\rho\in D_1$. Since $\mathcal{H}(\rho(t)|\rho^\infty)$ is a Lyapunov function, then $\rho(t)\in D_1$ for all $t>0$. Following Lemma \ref{LC2}, there exists $r_2(x)>0$, such that 
\begin{equation*}
 \frac{d^2}{dt^2}\mathcal{H}(\rho(t)|\rho^\infty)\geq -r_2(x)\frac{d}{dt}\mathcal{H}(\rho|\rho^\infty)\ ,
 \end{equation*}
which implies
\begin{equation*}
\int_{0}^\infty \frac{d^2}{d\tau^2}\mathcal{H}(\rho(\tau)|\rho^\infty)d\tau\geq \int_{0}^\infty- r_2(x)\frac{d}{d\tau}\mathcal{H}(\rho(\tau)|\rho^\infty)d\tau\ ,
\end{equation*}
i.e.
\begin{equation*}
\mathcal{I}(\rho|\rho^\infty)=\frac{d}{d\tau}\mathcal{H}(\rho(\tau)|\rho^\infty)|^{\tau=\infty}_{\tau=0}\geq r_2(x)\big(\mathcal{H}(\rho(\tau)|\rho^\infty)|^{\tau=0}_{\tau=\infty}\big)=r_2(x)\mathcal{H}(\rho|\rho^\infty)\ ,
\end{equation*}
where $\lim_{t\rightarrow \infty}\frac{d}{dt}\mathcal{H}(\rho(t)|\rho^\infty)=\lim_{t\rightarrow \infty}\mathcal{H}(\rho(t)|\rho^\infty)=0$. 
Thus
 \begin{equation*}
\lambda_1=\sup_{\rho\in D_1}\frac{\mathcal{H}(\rho|\rho^\infty)}{\mathcal{I}(\rho|\rho^\infty)}  \leq \frac{1}{r_2(x)}<\infty\ .
\end{equation*}

On the other hand, if $\rho\in D_2$, we shall show 
\begin{equation*}
\lambda_2=\sup_{\rho\in D_2}\frac{\mathcal{H}(\rho|\rho^\infty)}{\mathcal{I}(\rho|\rho^\infty)}\leq \frac{\sup_{\rho\in D_2}\mathcal{H}(\rho|\rho^\infty)}{\inf_{\rho\in D_2}\mathcal{I}(\rho|\rho^\infty)}<\infty\ .
\end{equation*}
It is trivial that $\mathcal{H}(\rho|\rho^\infty)$ is bounded above. We only need to show $\inf_{\rho\in D_2}\mathcal{I}(\rho|\rho^\infty)>0$.
Assume this is not true, i.e. $\inf_{\rho\in D_2}\mathcal{I}(\rho|\rho^\infty)=0$. Since $\mathcal{I}(\cdot|\rho^\infty)$ is a lower semi continuous function in $\mathcal{P}(G)$,
$\mathcal{I}(\cdot|\rho^\infty)$ is infinity on $\mathcal{P}(G)\setminus \mathcal{P}_o(G)$, and $D_2$ is a compact set, there exists $\rho^*\in D_2\cap \mathcal{P}_o(G)$, such that 
\begin{equation*}
\mathcal{I}(\rho^*|\rho^\infty)=F(\rho^*)^TL(\rho^*)F(\rho^*)\ .
\end{equation*}
This implies $F_i(\rho^*)=F_j(\rho^*)$ for any $(i,j)\in E$. Since $G$ is connected, then $\rho^*=\rho^\infty=\arg\min_{\rho\in \mathcal{P}(G)}\mathcal{F}(\rho)$, which contradicts $\rho^*\in D_2$. By choosing $\frac{1}{2\lambda}=\max\{\lambda_1, \lambda_2\}$, we prove the result. 
\end{proof}
\subsection{Asymptotic properties}
If $\mathbb{W}$ is not a positive definite matrix, there may exist multiple Gibbs measures. Facing these multiple equilibria, it may not be possible to find one explicit rate for any initial conditions, unless there are only a finite
number of equilibria. However, the asymptotic convergence rate can be established whenever the solution is near a equilibrium. In what follows, we study such an asymptotic rate.

Assume that the initial measure $\rho^0$ is in a basin of attraction of an equilibrium $\rho^{\infty}$, meaning \begin{equation*}
(A)\quad \lim_{t\rightarrow \infty} \rho(t)= \rho^{\infty}\quad \textrm{and} \quad 
\textrm{$ \rho^{\infty}$ is an isolated equilibrium}\ .
\end{equation*}

\begin{theorem}\label{asy}
Let (A) hold and 
$$\lambda=\lambda_{sec}(L(\rho^\infty)\cdot \textrm{Hess}\mathcal{F}(\rho^\infty))>0\ .$$ Then   
for any sufficiently small 
$\epsilon>0$ satisfying $(\lambda - \epsilon) >0$,  there exists a time $T>0$, such that when $t>T$,
\begin{equation*}
\mathcal{F}( \rho(t))-\mathcal{F}( \rho^{\infty})\leq e^{-2(\lambda-\epsilon)(t-T)}
(\mathcal{F}( \rho^0)-\mathcal{F}( \rho^{\infty}))\ .
\end{equation*}
\end{theorem}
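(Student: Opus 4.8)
The plan is to reduce the statement to a sharp \emph{local} entropy--Fisher-information inequality near $\rho^\infty$ and then close with Gronwall's inequality, in the same spirit as Lemma \ref{LC2} but tracking the exact spectral constant instead of the crude bounds from Lemma \ref{lem7}. Write $H=\textrm{Hess}\mathcal{F}(\rho^\infty)$ and $L=L(\rho^\infty)$, and recall the dissipation identity $\frac{d}{dt}[\mathcal{F}(\rho(t))-\mathcal{F}(\rho^\infty)]=-F(\rho)^TL(\rho)F(\rho)$. The goal is to show that for every small $\epsilon>0$ there is a neighborhood of $\rho^\infty$ on which $F(\rho)^TL(\rho)F(\rho)\geq 2(\lambda-\epsilon)[\mathcal{F}(\rho)-\mathcal{F}(\rho^\infty)]$. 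Granting this, assumption (A) supplies a time $T$ after which $\rho(t)$ lies in that neighborhood; Gronwall then yields decay at rate $2(\lambda-\epsilon)$ from time $T$, and the monotonicity $\mathcal{F}(\rho(T))\leq\mathcal{F}(\rho^0)$ (since $\mathcal{F}$ decreases along the flow) replaces $\mathcal{F}(\rho(T))$ by $\mathcal{F}(\rho^0)$ on the right-hand side, giving exactly the asserted bound.

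To obtain the local inequality I would expand both sides to second order in the displacement $\sigma=\rho-\rho^\infty$, which lies in $T_{\rho^\infty}\mathcal{P}_o(G)$ because $\rho$ and $\rho^\infty$ are both probability vectors, so $\mathbf{1}^T\sigma=0$. Since $\rho^\infty$ is an equilibrium, $F(\rho^\infty)=c\mathbf{1}$ is constant, hence $F(\rho^\infty)\cdot\sigma=0$ and Taylor expansion gives $\mathcal{F}(\rho)-\mathcal{F}(\rho^\infty)=\frac{1}{2}\sigma^TH\sigma+o(\|\sigma\|^2)$. For the Fisher information, $F(\rho)=c\mathbf{1}+H\sigma+o(\|\sigma\|)$, and because $L(\rho)\mathbf{1}=0$ the constant part is annihilated, so $F(\rho)^TL(\rho)F(\rho)=(H\sigma)^TL(\rho)(H\sigma)+o(\|\sigma\|^2)$; substituting $L(\rho)=L+O(\|\sigma\|)$ changes this only by $O(\|\sigma\|^3)$, leaving the leading quadratic form $(H\sigma)^TL(H\sigma)$.

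The heart of the argument is to identify $\min_{0\neq\sigma\in T_{\rho^\infty}\mathcal{P}_o(G)}\frac{(H\sigma)^TL(H\sigma)}{\sigma^TH\sigma}$ with $\lambda$. Since $\rho^\infty$ is an isolated, asymptotically stable equilibrium of the gradient flow, it is a strict local minimizer of the Lyapunov function $\mathcal{F}$, so $H$ is positive definite on $T_{\rho^\infty}\mathcal{P}_o(G)$ and $H^{1/2}$ is available; the substitution $\tau=H^{1/2}\sigma$ converts the quotient into the Rayleigh quotient of the symmetric positive-semidefinite matrix $S=H^{1/2}LH^{1/2}$. The delicate bookkeeping is that $\ker S=\textrm{span}(H^{-1/2}\mathbf{1})$, while the constraint $\mathbf{1}^T\sigma=0$ transforms precisely into $\tau\perp H^{-1/2}\mathbf{1}$; thus $\tau$ ranges over the orthogonal complement of $\ker S$, so the zero eigenvalue is excluded and the minimum is the smallest \emph{positive} eigenvalue of $S$. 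As $S$, $HL$ and $LH$ share the same spectrum, this equals $\lambda_{sec}(LH)=\lambda$, whence $(H\sigma)^TL(H\sigma)\geq\lambda\,\sigma^TH\sigma$ on the tangent space. Therefore the leading form of $F(\rho)^TL(\rho)F(\rho)-2(\lambda-\epsilon)[\mathcal{F}(\rho)-\mathcal{F}(\rho^\infty)]$ is bounded below by $\epsilon\,\sigma^TH\sigma\geq\epsilon\,\lambda_{\min}(H|_T)\|\sigma\|^2$, a positive-definite form that dominates the $o(\|\sigma\|^2)$ remainder uniformly in direction for $\|\sigma\|$ small, establishing the local inequality. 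I expect this spectral identification — matching the tangent-space constraint under $H^{1/2}$ with the complement of $\ker S$, so that one recovers $\lambda_{sec}$ and not $0$, and confirming $H\succ0$ on $T_{\rho^\infty}\mathcal{P}_o(G)$ — to be the main obstacle; the subsequent uniform control of the remainder terms is then routine.
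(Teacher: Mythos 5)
Your proposal is correct in substance but takes a genuinely different route from the paper's proof. The paper argues at the level of the \emph{second} derivative of the energy along the flow: using convergence $\rho(t)\to\rho^\infty$ it picks $T$ so that for $t>T$ both $\lambda_{sec}(L(\rho)\cdot\textrm{Hess}\mathcal{F}(\rho))\geq\lambda-\tfrac12\epsilon$ (by continuity of the spectrum) and $\|\dot\rho\|_\infty\leq\epsilon\, m(\rho^0)\lambda_{sec}(\hat{L})/\lambda_{\max}(\hat{L})$; then, exactly as in Lemma \ref{LC2}, the quadratic term $2\dot\rho^T\textrm{Hess}\mathcal{F}(\rho)\dot\rho/\dot\rho^TL^{-1}(\rho)\dot\rho$ is bounded below by $2\lambda_{sec}(L(\rho)\,\textrm{Hess}\mathcal{F}(\rho))$ and the cubic term $\dot\rho^TL^{-1}(\rho)L(\dot\rho)L^{-1}(\rho)\dot\rho/\dot\rho^TL^{-1}(\rho)\dot\rho$ is absorbed into the $\epsilon$ because $\dot\rho\to 0$, giving $\frac{d^2}{dt^2}\mathcal{F}\geq-2(\lambda-\epsilon)\frac{d}{dt}\mathcal{F}$ and then Gronwall after integrating over $[t,\infty)$. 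You instead prove a \emph{local entropy--entropy-production inequality} $F(\rho)^TL(\rho)F(\rho)\geq 2(\lambda-\epsilon)\,(\mathcal{F}(\rho)-\mathcal{F}(\rho^\infty))$ by Taylor-expanding both functionals at $\rho^\infty$ and identifying the sharp constant as a Rayleigh quotient, then apply Gronwall once to the first derivative. The two routes produce the same constant: your quotient $(H\sigma)^TL(H\sigma)/\sigma^TH\sigma$ and the paper's $\sigma^TH\sigma/\sigma^TL^{-1}\sigma$ both have, as their minimum over the tangent space, the smallest nonzero eigenvalue of $L(\rho^\infty)\textrm{Hess}\mathcal{F}(\rho^\infty)$, by congruence to $H^{1/2}LH^{1/2}$ and $L^{1/2}HL^{1/2}$ respectively. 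What your version buys is that it never touches the second derivative or the awkward $L(\dot\rho)$ term; what it costs is that you must control the Taylor remainders uniformly in direction, which the paper sidesteps by phrasing everything in terms of continuity of eigenvalues along the trajectory.

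Two points in your write-up need repair, though both are fixable. First, the inference ``isolated attracting equilibrium $\Rightarrow$ strict local minimizer $\Rightarrow$ $H\succ0$ on $T_{\rho^\infty}\mathcal{P}_o(G)$'' is not valid (a quartic minimum is a counterexample to the second implication); the positivity you need should instead be extracted from the stated hypothesis $\lambda=\lambda_{sec}(LH)>0$, since by Sylvester's law of inertia applied to $L^{1/2}HL^{1/2}$ the restriction $H|_{T}$ is positive definite exactly when the $n-1$ nonzero eigenvalues of $LH$ are all positive. Second, in this subsection $\mathbb{W}$ need not be positive semidefinite, so the full matrix $H=\textrm{Hess}\mathcal{F}(\rho^\infty)$ can be indefinite on $\mathbb{R}^n$ even when $H|_T\succ0$, and $H^{1/2}$ need not exist; replace it by $(\mathbb{P}H\mathbb{P})^{1/2}$ on the tangent space, which changes nothing since $L(\rho)\mathbb{P}=L(\rho)$ and hence $(H\sigma)^TL(H\sigma)=(\mathbb{P}H\mathbb{P}\sigma)^TL(\mathbb{P}H\mathbb{P}\sigma)$ for $\sigma\in T_{\rho^\infty}\mathcal{P}_o(G)$. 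With these adjustments your argument goes through.
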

\begin{proof}
 Since $\lim_{t\rightarrow \infty}\rho(t)=\rho^\infty$, for sufficient small $\epsilon>0$, there exists $t>T$, such that 
$$\lambda_{sec}(L(\rho)\cdot \textrm{Hess}\mathcal{F}(\rho)) \geq \lambda-\frac{1}{2}\epsilon\ , $$
and $$\|\dot\rho\|_\infty=\|L(\rho)F(\rho)\|_\infty\leq \epsilon \frac{m(\rho^0)\cdot\lambda_{sec}(\hat{L})}{\lambda_{\max}(\hat{L})}\ .$$
Similar to the proof of Lemma \ref{LC2},  we have
\begin{equation*}
\begin{split}
\frac{\frac{d^2}{dt^2}\mathcal{F}(\rho)}{\frac{d}{dt}\mathcal{F}(\rho)}= &\frac{2\dot\rho^T\textrm{Hess}\mathcal{F}(\rho)\dot\rho-\dot\rho^TL^{-1}(\rho)L(\dot\rho)L^{-1}(\rho) \dot\rho}{\dot\rho^TL^{-1}(\rho)\dot\rho}\\
\geq &2\lambda_{sec}(L(\rho)\cdot \textrm{Hess}\mathcal{F}(\rho))-\|\dot\rho\|_\infty\cdot \frac{\lambda_{\max}(\hat{L})}{m(\rho^0)\cdot \lambda_{sec}(\hat{L})}\\
\geq & 2(\lambda-\epsilon)\ .
\end{split}
\end{equation*}
Following strategies in \eqref{C}, we prove the result.
\end{proof}

The techniques used in this proof can also be applied to some non-gradient flows, for example, the FPEs with a 
non-symmetric interaction potential $\mathbb{W}$. In this approach, the free energy $\mathcal{F}(\rho)$ is no longer exists. However, the relative Fisher information always exists, which is used to measure the closeness between $\rho(t)$ and $\rho^\infty$.
\begin{corollary}\label{asy2}
Let (A) hold and 
$$\lambda=\lambda_{sec}(L(\rho^\infty)\cdot (\mathcal{J}{F}^T+\mathcal{J}{F})(\rho^\infty))>0\ ,$$
where $\mathcal{J}F$ is the Jacobi operator on vector function $F(\rho)$. Then for any sufficiently small 
$\epsilon>0$ satisfying $(\lambda - \epsilon) >0$,  there exists a time $T>0$, such that when $t>T$,
\begin{equation*}
\mathcal{I}(\rho(t)|\rho^\infty)\leq e^{-2(\lambda-\epsilon)(t-T)}\mathcal{I}(\rho(T)|\rho^\infty)\ .
\end{equation*}
\end{corollary}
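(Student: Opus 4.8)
The plan is to use the relative Fisher information $\mathcal{I}(\rho(t)|\rho^\infty)=F(\rho)^TL(\rho)F(\rho)$ itself as a Lyapunov functional, in place of the (now nonexistent) free energy, and to show that once $\rho(t)$ has entered a small neighbourhood of the isolated equilibrium it obeys a differential inequality $\frac{d}{dt}\mathcal{I}\le -c\,\mathcal{I}$ with $c=c(t)\to\lambda$; integrating this, exactly as in the Gronwall step \eqref{C}, yields the exponential decay of $\mathcal{I}$. The whole computation is the first-order analogue of the second-order estimate performed for $\mathcal{F}$ in Lemma \ref{LC2} and Theorem \ref{asy}, the only structural change being that $\textrm{Hess}\mathcal{F}$ is replaced by the symmetric part of the (no longer symmetric) Jacobian $\mathcal{J}F$.

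First I would differentiate $\mathcal{I}$ along the flow $\dot\rho=-L(\rho)F(\rho)$. Using $\dot F=\mathcal{J}F\,\dot\rho$, the identity $\dot L=L(\dot\rho)$ (which holds because $\theta_{ij}$ is linear in $\rho$), and $L(\rho)F(\rho)=-\dot\rho$, a direct calculation gives
\begin{equation*}
-\frac{d}{dt}\mathcal{I}(\rho(t)|\rho^\infty)=\dot\rho^T\bigl(\mathcal{J}F+\mathcal{J}F^T\bigr)\dot\rho-\dot\rho^TL^{-1}(\rho)\,L(\dot\rho)\,L^{-1}(\rho)\dot\rho\ .
\end{equation*}
This is precisely the second-derivative formula entering \eqref{EC}, with $2\,\textrm{Hess}\mathcal{F}$ replaced by $\mathcal{J}F+\mathcal{J}F^T$; in the gradient case $\mathcal{J}F=\textrm{Hess}\mathcal{F}$ is symmetric, the two agree, and one recovers both the identity $\mathcal{I}=-\frac{d}{dt}\mathcal{F}$ and the rate of Theorem \ref{asy}.

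Next I would estimate the two terms separately, following Lemma \ref{LC2} almost verbatim. Setting $\sigma=L^{-\frac12}(\rho)\dot\rho/\|L^{-\frac12}(\rho)\dot\rho\|_2$, which is orthogonal to the constant direction, the quadratic term divided by $\mathcal{I}=\dot\rho^TL^{-1}(\rho)\dot\rho$ becomes $\sigma^TL^{\frac12}(\rho)(\mathcal{J}F+\mathcal{J}F^T)L^{\frac12}(\rho)\sigma$; since $L^{\frac12}(\mathcal{J}F+\mathcal{J}F^T)L^{\frac12}$ is symmetric and similar to $L(\mathcal{J}F+\mathcal{J}F^T)$, hence shares its (real) spectrum, this is bounded below by $\lambda_{sec}\bigl(L(\rho)(\mathcal{J}F+\mathcal{J}F^T)(\rho)\bigr)$. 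For the cubic term I would use $L(\dot\rho)\preceq\|\dot\rho\|_\infty\hat L$ together with the bound $\lambda_{\max}(L^{-1}(\rho))\le 1/(m(\rho^0)\lambda_{sec}(\hat L))$ from Lemma \ref{lem7}, giving the estimate $\|\dot\rho\|_\infty\lambda_{\max}(\hat L)/(m(\rho^0)\lambda_{sec}(\hat L))$. Invoking hypothesis (A), as $t\to\infty$ continuity of eigenvalues forces $\lambda_{sec}(L(\rho)(\mathcal{J}F+\mathcal{J}F^T)(\rho))\to\lambda$ while $\|\dot\rho\|_\infty=\|L(\rho)F(\rho)\|_\infty\to0$; so one fixes $T$ beyond which the quadratic quotient exceeds $\lambda-\frac12\epsilon$ and the cubic estimate stays below $\frac12\epsilon$, which delivers $\frac{d}{dt}\mathcal{I}\le -(\lambda-\epsilon)\mathcal{I}$ for $t>T$ and hence the claimed rate.

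The step I expect to be the main obstacle is the control of the cubic term $\dot\rho^TL^{-1}(\rho)L(\dot\rho)L^{-1}(\rho)\dot\rho$: it is the only contribution not pinned down by a fixed eigenvalue and could a priori overwhelm the quadratic one. The decisive observation, already exploited in the proof sketch following Theorem \ref{th12}, is that it is genuinely one order smaller in $\dot\rho$ than $\mathcal{I}=\dot\rho^TL^{-1}(\rho)\dot\rho$, so that hypothesis (A)---which drives $\dot\rho=-L(\rho)F(\rho)\to0$---renders it asymptotically negligible; this is exactly why only a near-equilibrium rate, rather than a global one, can be extracted. A secondary difficulty, absent in the gradient setting, is that $\mathcal{J}F$ is no longer symmetric, so one must pass to the symmetrized matrix $\mathcal{J}F+\mathcal{J}F^T$ and verify that $L(\mathcal{J}F+\mathcal{J}F^T)$ has real eigenvalues via its similarity to $L^{\frac12}(\mathcal{J}F+\mathcal{J}F^T)L^{\frac12}$, so that $\lambda_{sec}$ is well defined.
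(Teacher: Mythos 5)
Your proposal follows essentially the same route as the paper: the same identity $-\frac{d}{dt}\mathcal{I}=\dot\rho^T(\mathcal{J}F^T+\mathcal{J}F)\dot\rho-\dot\rho^TL^{-1}(\rho)L(\dot\rho)L^{-1}(\rho)\dot\rho$, the same eigenvalue bound on the symmetrized-Jacobian term and the same $\|\dot\rho\|_\infty$-control of the cubic term imported from Theorem \ref{asy}, and the same Gronwall conclusion, with your write-up merely supplying the details the paper labels ``straightforward.'' One caveat: your chain honestly delivers $\frac{d}{dt}\mathcal{I}\le-(\lambda-\epsilon)\mathcal{I}$, hence $e^{-(\lambda-\epsilon)(t-T)}$ rather than the stated $e^{-2(\lambda-\epsilon)(t-T)}$ --- the factor $2$ that was explicit in Theorem \ref{asy} (coming from $2\,\textrm{Hess}\mathcal{F}$) is already absorbed into $\mathcal{J}F^T+\mathcal{J}F$ in the present definition of $\lambda$, so the exponent in the corollary (and in the paper's own one-line proof) double-counts it; this discrepancy is inherited from the paper, not introduced by you.
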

\begin{proof}
 Since 
\begin{equation*}
-{\frac{d}{dt}\mathcal{I}(\rho(t)|\rho^\infty)}=\dot\rho^T(\mathcal{J}{F}(\rho)^T+\mathcal{J}{F}(\rho))\dot\rho-\dot\rho^TL^{-1}(\rho)L(\dot\rho)L^{-1}(\rho) \dot\rho\ .
\end{equation*}
Following the proof in Theorem \ref{asy}, it is straightforward to show that if $t>T$, there exists $\epsilon>0$, such that 
$$\frac{d}{dt}\mathcal{I}(\rho(t)|\rho^\infty)\leq -2(\lambda-\epsilon) \mathcal{I}(\rho(t)|\rho^\infty)\ .$$
By the Gronwall's equality, we prove the result.
\end{proof}
In the end, we shall give an explicit formula for the quadratic form in \eqref{EC}, i.e. 
$$\lambda=\min_{\sigma\in \mathcal{T}_\rho\mathcal{P}_o(G)}~\frac{\sigma^T\textrm{Hess}\mathcal{F} \sigma}{\sigma^T L^{-1}(\rho)\sigma}\ . $$
From Lemma \ref{lem2}, there exists a unique $\Phi\in \mathbb{R}^n$, up to constant shift, such that $\sigma=L(\rho)\Phi$. Thus
\begin{equation}\label{LAM}
\begin{split}
\lambda=&\min_{\Phi\in \mathbb{R}^n}~\frac{\Phi^T\cdot L(\rho)\cdot \textrm{Hess}\mathcal{F}\cdot L(\rho)\cdot \Phi}{\Phi^TL(\rho)\Phi}\\
=&\min_{\Phi\in \mathbb{R}^n}~\{\Phi^T\cdot L(\rho)\cdot \textrm{Hess}\mathcal{F}\cdot L(\rho)\cdot \Phi~\colon~ \Phi^TL(\rho)\Phi=1 \}\ .
\end{split}
\end{equation}
We can rewrite the formula \eqref{LAM} explicitly. Introducing
 \begin{equation*}
h_{ij, kl}=(\frac{\partial^2}{\partial \rho_i\partial p_k}+\frac{\partial^2}{\partial \rho_j\partial p_l}-\frac{\partial^2}{\partial \rho_i\partial p_l}-\frac{\partial^2}{\partial \rho_j\partial p_k})\mathcal{F}(p)\ ,\quad \textrm{for any $i$, $j$, $k$, $l\in V$}\ ,
\end{equation*}
we have 
\begin{equation}\label{def}
\lambda=\min_{\Phi\in \mathbb{R}^n} \frac{1}{4}\sum_{(i,j)\in E}
\sum_{(k,l)\in E}\frac{1}{d_{ij}^2d_{kl}^2}h_{ij, kl}(\Phi_i-\Phi_j)\theta_{ij}(\Phi_k-\Phi_l)\theta_{kl}\ ,
\end{equation}
s.t.
\begin{equation*}
 \frac{1}{2}\sum_{(i,j)\in E}\omega_{ij}(\Phi_i-\Phi_j)^2\theta_{ij}=1\ .
\end{equation*}
In fact, it is not hard to show that $\lambda$ is the eigenvalue problem of Hessian operator at the equilibrium in $(\mathcal{P}_o(G), \mathcal{W}_2)$. In the next section, we shall present what \eqref{def} suggests in its continuous 
analog.

\section{Connection with Wasserstein geometry}\label{analog}
We exploit the meaning of $h_{ij,kl}$ by examining its continuous analog in this section. Our calculation
indicates a nice relation to a famous identity in Riemannian geometry, known as the Yano's formula \cite{yano1,
yano2}. 

Consider a smooth finite dimensional Riemannian manifold $\mathcal{M}$. We assume 
that $\mathcal{M}$ is oriented, compact and has no boundary. We denote $\mathcal{P}(\mathcal{M})$ the
space of density functions supported on $\mathcal{M}$, $T_\rho \mathcal{P}(\mathcal{M})$ the tangent space
at $\rho \in \mathcal{P}(\mathcal{M})$, i.e. $T_\rho \mathcal{P}(\mathcal{M})=\{\sigma(x)~:~\int_{\mathcal{M}}\sigma(x)dx=0\}$.
Following Otto calculus in \cite{OV, vil2008}, for any $\sigma(x) \in T_\rho \mathcal{P}(\mathcal{M})$, there exists
a function $\Phi(x)$ satisfying $\sigma(x)=-\nabla\cdot (\rho \nabla \Phi(x))$. This correspondence and the
2-Wasserstein metric endow an scalar inner production on $T_\rho \mathcal{P}(\mathcal{M})$
\begin{equation*}
(\sigma(x), \tilde \sigma (x)) = (\nabla \Phi, \nabla \tilde \Phi)_\rho:=\int_{\mathcal{M}} \nabla \Phi \cdot \nabla \tilde \Phi \rho dx\ .
\end{equation*}


Now consider a smooth free energy $\mathcal{F}:~\mathcal{P}(\mathcal{M})\rightarrow \mathbb{R}$. We assume 
that $\rho^*\in \mathcal{P}(\mathcal{M})$ is an equilibrium satisfying
\begin{equation}\label{localmin}
 \rho^*(x)>0\ ,\quad \nabla \frac{\delta }{\delta \rho(x)} \mathcal{F}(\rho)|_{\rho^*}=0\ ,  
\end{equation}
where $\frac{\delta}{\delta \rho(x)}$ is the first variation operator in $L_2$ metric. 


To understand $h_{ij,kl}$, we calculate the Hessian of $\mathcal{F}$ at $\rho^*$ with respect to the 2-Wasserstein metric, and show
\begin{equation}\label{Thm_Hess}
(\textrm{Hess}_{\mathcal{W}_2}\mathcal{F}\cdot\nabla\Phi, \nabla\Phi)_{\rho^*}= \int_{\mathcal{M}}\int_{\mathcal{M}}(\textrm{D}_{x}\textrm{D}_y\frac{\delta^2}{\delta \rho(x)\delta \rho(y)}\mathcal{F}(\rho)|_{\rho^*}\nabla \Phi(x), \nabla \Phi(y))\rho^*(x)\rho^*(y)dxdy\ ,
\end{equation}
where $\frac{\delta^2}{\delta \rho(x)\delta \rho(y)}\mathcal{F}(\rho)$ is the second variation of functional $\mathcal{F}(\rho)$ in $L_2$ metric,
$\textrm{D}_x$ and $\textrm{D}_y$ are the covariant derivatives in $x$ and $y$ respectively, and $\Phi$ an arbitrary smooth function.

It is known that Hessian can be computed by differentiating the function twice
along the geodesic. Then the Hessian at the equilibrium $\rho^*$ satisfies 
\begin{equation*}
\begin{split}
(\textrm{Hess}_{\mathcal{W}_2}\mathcal{F}\cdot \nabla\Phi, \nabla \Phi)_\rho=&\frac{d^2}{dt^2}\mathcal{F}(\rho_t)|_{t=0}\ .
\end{split}
\end{equation*}
where $\rho_t$ and $\nabla \Phi_t$ are time dependent functions satisfying the geodesic equation \cite{OV, vil2008},
$$\begin{cases}
&\frac{\partial \rho_t}{\partial t}+\nabla\cdot(\rho_t\nabla \Phi_t)=0\\
&\frac{\partial \Phi_t}{\partial t}+\frac{1}{2}(\nabla \Phi_t)^2=0
\end{cases},$$ with an initial measure $\rho|_{t=0}=\rho^*$ and velocity $\nabla\Phi_t|_{t=0}=\nabla\Phi$. 
Following the geodesic, we have 
\begin{equation}\label{Otto_law}
\begin{split}
\frac{d}{dt}\mathcal{F}(\rho_t)=&\int_{\mathcal{M}}  \frac{\delta }{\delta\rho(x)}\mathcal{F}(\rho_t) \frac{\partial \rho_t}{\partial t}dx=-\int_{\mathcal{M}}  \frac{\delta }{\delta\rho(x)}\mathcal{F}(\rho_t)\nabla\cdot (\rho_t\nabla \Phi_t)dx\\
=&\int_{\mathcal{M}} \nabla\frac{\delta }{\delta\rho(x)}\mathcal{F}(\rho_t)\cdot \nabla \Phi_t(x) \rho_t(x)dx\ ,
\end{split}
\end{equation}
where the third equality holds by the integration by parts formula and the fact that $\mathcal{M}$ has no boundary. 

Similarly, we obtain the second order
derivative along the geodesic, 
 \begin{equation*}
\begin{split}
\frac{d^2}{dt^2}\mathcal{F}(\rho_t)=&\int_{\mathcal{M}} \frac{d}{d t}[\nabla \frac{\delta }{\delta\rho(x)}\mathcal{F}(\rho_t)] \cdot\nabla \Phi_t(x)\rho_t(x) dx\hspace{2cm} {(A_1)}\\
&+\int_{\mathcal{M}} \nabla \frac{\delta}{\delta \rho(x)}\mathcal{F}(\rho_t)\cdot \frac{\partial}{\partial t}({\nabla \Phi_t(x)}\rho_t(x)) dx\ .\hspace{1.2cm}{(A_2)}
\end{split}
\end{equation*}
Because $\rho|_{t=0}=\rho^*$ and \eqref{localmin} holds, $(A_2)|_{t=0}=0$. Thus
\begin{equation}\label{Otto_S_1}
\begin{split}
\frac{d^2}{dt^2}\mathcal{F}(\rho_t)|_{t=0}=&(A_1)|_{t=0}=\int_{\mathcal{M}} \frac{d}{dt}[\nabla \frac{\delta }{\delta\rho(x)}\mathcal{F}(\rho_t)]|_{t=0} \cdot\nabla \Phi(x)\rho(x) dx\\
=&\int_{\mathcal{M}} \nabla \frac{d}{d t}\frac{\delta }{\delta\rho(x)}\mathcal{F}(\rho_t)|_{t=0}\cdot\nabla \Phi(x)\rho(x) dx\\
=&-\int_{\mathcal{M}} \frac{d}{d t}\frac{\delta }{\delta\rho(x)}\mathcal{F}(\rho_t)|_{t=0}\nabla\cdot(\nabla \Phi(x)\rho(x)) dx\ .
\end{split}
\end{equation}
In addition, we compute the first order derivative of $\frac{\delta }{\delta\rho(x)}\mathcal{F}(\rho)$ along the 
geodesic and obtain
\begin{equation}\label{Otto_S_2}
\begin{split}
\frac{d}{d t}\frac{\delta }{\delta\rho(x)}\mathcal{F}(\rho_t)|_{t=0}=&\int_{\mathcal{M}}  \frac{\delta^2 }{\delta\rho(x)\delta \rho(y)}\mathcal{F}(\rho) \frac{\partial \rho(t,y)}{\partial t}|_{t=0}dy\\
=&-\int_{\mathcal{M}}  \frac{\delta^2 }{\delta\rho(x)\delta \rho(y)}\mathcal{F}(\rho)\nabla\cdot (\nabla \Phi(y) \rho(y))dy\ .
\end{split}
\end{equation}
 Substituting \eqref{Otto_S_2} into \eqref{Otto_S_1}, we get
 \begin{equation}\label{2}
 \begin{split}
 \frac{d^2}{dt^2}\mathcal{F}(\rho_t)|_{t=0}=&\int_{\mathcal{M}}\int_{\mathcal{M}}\frac{\delta^2}{\delta \rho(x)\delta \rho(y)}\mathcal{F}(\rho)|_{\rho^*}\nabla \cdot (\rho^*(x)\nabla \Phi(x) )\nabla\cdot (\rho^*(y)\nabla \Phi(y))dxdy \\
= &\int_{\mathcal{M}}\int_{\mathcal{M}}(D_{x}D_y\frac{\delta^2}{\delta \rho(x)\delta \rho(y)}\mathcal{F}(\rho)|_{\rho^*}\nabla \Phi(x), \nabla \Phi(y))\rho^*(x)\rho^*(y)dxdy\ , \\
\end{split}
 \end{equation} 
where the second equality is achieved by the integration by parts with respect to $x$ and $y$. Hence, we obtain \eqref{Thm_Hess}. 

Through \eqref{Thm_Hess}, we find the continuous analogue of $h_{ij,kl}$ as   
$$\textrm{D}_x\textrm{D}_y\frac{\delta^2}{\delta \rho(x) \delta \rho(y)}\mathcal{F}(\rho)$$ 

We next illustrate this analog in a particular situation, namely the linear entropy
\begin{equation*}
\mathcal{H}(\rho)=\int_{\mathcal{M}}\rho(x)\log\rho(x)dx\ .
\end{equation*}
In this case, the unique equilibrium (minimizer) $\rho^*(x)=1$ is a uniform measure on $\mathcal{M}$, where the total volume of $\mathcal{M}$ is assumed to be $1$. Hence 
\eqref{2} becomes, 
\begin{equation}\label{yano}
\begin{split}
(\textrm{Hess}_{\mathcal{W}_2}\mathcal{H}\cdot \nabla\Phi,\nabla\Phi)_{\rho^*}=&\int_{\mathcal{M}}[\nabla \cdot (\rho^*\nabla\Phi(x))]^2\frac{1}{\rho^*(x)}dx= \int_{\mathcal{M}}[\nabla\cdot (\nabla \Phi(x))]^2 dx\ .\\
\end{split}
\end{equation} 
The optimal transport theory \cite{OV, vil2008} gives  another formulation of Hessian:
\begin{equation}\label{OV}
(\textrm{Hess}_{\mathcal{W}_2}\mathcal{H}\cdot \nabla\Phi,\nabla\Phi)_{\rho}=\int_{\mathcal{M}} [\textrm{Ric}(\nabla \Phi(x), \nabla\Phi(x))+\textrm{tr}(\textrm{D}^2\Phi(x) \textrm{D}^2\Phi(x)^T)] \rho(x)dx\ ,
\end{equation}
where $\rho$ is an arbitrary density function, $\textrm{D}^2$ is the second covariant derivative and $\textrm{Ric}$ is the Ricci curvature tensor on $\mathcal{M}$. Evaluating \eqref{OV} at the equilibrium $\rho^*$ and comparing it
with \eqref{yano}, we observe
\begin{equation*}
\int_{\mathcal{M}}[\nabla\cdot(\nabla\Phi)]^2dx=\int_{\mathcal{M}} [\textrm{Ric}(\nabla \Phi, \nabla\Phi)+\textrm{tr}(\textrm{D}^2\Phi \textrm{D}^2\Phi^T)] dx\ ,
\end{equation*}
which is the well-known Yano's formula with vector field $\nabla \Phi$. 

\textbf{Acknowledgement:} This paper is based on Wuchen Li's thesis in Chapter 3, which was written under the
supervision of Professor Shui-Nee Chow and Professor Haomin Zhou. We would like to thank Professor Wilfrid Gangbo for many fruitful and inspirational discussions on the related topics.

\end{document}